\begin{document}

\theoremstyle{plain}
\newtheorem{thm}{Theorem}
\newtheorem{lemma}[thm]{Lemma}
\newtheorem{cor}[thm]{Corollary}
\newtheorem{conj}[thm]{Conjecture}
\newtheorem{prop}[thm]{Proposition}
\newtheorem{heur}[thm]{Heuristic}

\theoremstyle{definition}
\newtheorem{defn}[thm]{Definition}
\newtheorem*{ex}{Example}
\newtheorem*{notn}{Notation}

\title[Convergence Rates of Gibbs Samplers]{Analysis of Convergence Rates of some Gibbs Samplers on Continuous State Spaces}
\author{Aaron Smith}

\address{Department of Mathematics, Stanford University, Stanford, CA 94305}
\email{asmith3@math.stanford.edu}
\date{\today}
\maketitle

\section{Abstract}

We use a non-Markovian coupling and small modifications of techniques from the theory of finite Markov chains to analyze some Markov chains on continuous state spaces. The first is a generalization of a sampler introduced by Randall and Winkler, the second a Gibbs sampler on narrow contingency tables.

\section{Introduction}

The problem of sampling from a given distribution on high-dimensional continuous spaces arises in the computational sciences and Bayesian statistics, and a frequently-used solution is Markov chain Monte Carlo (MCMC); see \cite{Liu01} for many examples. Because MCMC methods produce good samples only after a lengthy mixing period, a long-standing mathematical question is to analyze the mixing times of the MCMC algorithms which are in common use. Although there are many mixing conditions, the most commonly used is called the mixing time, and is based on the total variation distance: \par
For measures $\nu$, $\mu$ with common measurable $\sigma$-algebra $\mathcal{A}$, the \textit{total variation distance} between $\mu$ and $\nu$ is
\begin{equation*}
\vert \vert \mu - \nu \vert \vert_{TV} = \sup_{A \in \mathcal{A}} \left( \mu(A) - \nu(A) \right)
\end{equation*} \par 
For an ergodic discrete-time Markov chain $X_{t}$ with unique stationary distribution $\pi$, the \textit{mixing time} is
\begin{equation*}
\tau(\epsilon) = \inf \{t : \, \vert \vert \mathcal{L}(X_{t}) - \pi \vert \vert_{TV} < \epsilon \}
\end{equation*} \par

Although most scientific and statistical uses of MCMC methods occur in continuous state spaces, much of the mathematical mixing analysis has been in the discrete setting. The methods that have been developed for discrete chains often break down when used to analyze continuous chains, though there are some efforts, such as \cite{Yuen01} \cite{Rose95} \cite{LoVe03}, to create general techniques. This paper extends the author's previous work in \cite{Smit11} and work of Randall and Winkler \cite{RaWi05a}, and attempts to provide some more examples of relatively sharp analyses of continuous chains similar to those used to develop the discrete theory. \par 

The first process that we analyze is a Gibbs sampler on the simplex with a very restricted set of allowed moves. Fix a finite group $G$ of size $n$ with symmetric generating set $R$ of size $m$, with $id \notin R$. For unity of notation, label the group elements with the integers from 1 to $n$. We consider the process $X_{t}[g]$ on the simplex $\Delta_{G} = \{ X \in \mathbb{R}^{n} \, \vert \sum_{g \in G} X[g] = 1; X[g] \geq 0 \}$. At each step, choose $g \in G$, $r \in R$ and $\lambda \in [0,1]$ uniformly, and set 

\begin{align} \label{EqCayleyMoveRep}
X_{t+1}[g] &= \lambda (X_{t}[g] + X_{t}[gr]) \nonumber \\
X_{t+1}[gr] &= (1 - \lambda) (X_{t}[g] + X_{t}[gr]) 
\end{align} \par 

For all other $h \in G$ set $X_{t+1}[h] = X_{t}[h]$. Let $U_{G}$ be the uniform distribution on $\Delta_{G}$; this is also the stationary distribution of $X_{t}$. Also consider a random walk $Z_{t}$ on $G$, where in each stage we choose $g \in G$ and $r \in R$ uniformly at random and set $Z_{t+1} = gr$ if $Z_{t} = g$, set $Z_{t+1} = g$ if $Z_{t} = gr$, and $Z_{t+1} = Z_{t}$ otherwise. This is the standard simple random walk on the Cayley graph, slowed down by a factor of about $n$. Let $\widehat{\gamma}$ be the spectral gap of the walk $Z_{t}$, and follow the notation that $\mathcal{L}(X)$ denotes the distribution of a random variable $X$.

\begin{thm} [Convergence Rate for Gibbs Sampler with Geometry] \label{ThmConvRateCayley}
For $T >  \frac{8C}{\widehat{\gamma}} \log(n)$, $C > \frac{103}{4}$, and $n$ satisfying $n > \max \left( 4096, \frac{C}{3} + \frac{10}{3}, \left( \frac{C}{3} - \frac{13}{12} \right) \log(n) \right)$
\begin{equation*}
\vert \vert \mathcal{L}( X_{T}) - U_{G} \vert \vert_{TV} \leq 7 n^{4.5 -\frac{C}{6}}
\end{equation*}
and conversely for $T < \frac{k}{\widehat{\gamma}}$,
\begin{equation*}
\vert \vert \mathcal{L} (X_{T}) - U_{G} \vert \vert_{TV} \geq \frac{1}{2} e^{-k} - 4 n^{-\frac{1}{3}}
\end{equation*}
\end{thm}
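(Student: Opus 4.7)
The plan is to prove the upper bound via a non-Markovian coupling that reduces convergence of $X_{t}$ on the simplex to the mixing of the random walk $Z_{t}$ on the Cayley graph, and to prove the lower bound by lifting an eigenfunction of $Z_{t}$ to a distinguishing statistic on the simplex.

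For the upper bound, I would construct a coupling of two copies $X_{t}, X_{t}'$, one starting from an arbitrary configuration and the other from the stationary distribution $U_{G}$. The key observation is that the move (\ref{EqCayleyMoveRep}) pools the masses at $g$ and $gr$ and splits them by an independent uniform $\lambda$, which is stochastically equivalent to assigning each infinitesimal piece of mass independently to one of the two endpoints. Thus, if one marks a unit of mass and tracks its location, it performs exactly the walk $Z_{t}$ on the Cayley graph. I would discretize the mass of each chain into a fine grid of labeled ``tokens,'' couple the two chains so that corresponding tokens perform identical steps of $Z_{t}$, and use the spectral gap $\widehat{\gamma}$ together with a standard $L^{2}$ bound to conclude that after $T > 8C \log(n) / \widehat{\gamma}$ steps, each token's position is within total variation $O(n^{-C/6})$ of uniform on $G$. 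A union bound over the tokens, and careful bookkeeping of the non-Markovian history recording which pairs of tokens have already coalesced, should give the bound $7 n^{4.5 - C/6}$; the $n^{4.5}$ factor comes from the discretization scale needed for the mass to be well-approximated, and the restrictions on $n$ are what is required to control the truncation error in one clean union bound.

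For the lower bound, I would take a real eigenfunction $f : G \to \RR$ of $Z_{t}$ with eigenvalue $1 - \widehat{\gamma}$ and lift it to $F : \Delta_{G} \to \RR$ by $F(X) = \sum_{g \in G} f(g)\, X[g]$. A direct computation of $\mathbb{E}[F(X_{t+1}) \mid X_{t}]$ using (\ref{EqCayleyMoveRep}) shows that $F$ is a left eigenfunction of the transition operator on $\Delta_{G}$ with the same contraction rate $1 - \widehat{\gamma}$ (up to the averaging over $\lambda$), so $\mathbb{E}[F(X_{T})]$ decays like $e^{-\widehat{\gamma} T}$ from its initial value while $\mathbb{E}_{U_{G}}[F] = 0$. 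Starting $X_{0}$ at a vertex of $\Delta_{G}$ maximizing $|f|$ makes $F(X_{0}) = \Theta(1)$; bounding $\mathrm{Var}_{U_{G}}(F)$ by $O(1/n)$ (using that the uniform distribution on the simplex has coordinate covariances of order $1/n^{2}$) and applying Chebyshev to the distinguishing event $\{F > \tfrac{1}{2} e^{-k}\}$ yields the lower bound $\tfrac{1}{2} e^{-k} - 4 n^{-1/3}$.

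The main obstacle will be the coupling construction for the upper bound. Unlike the interval setting of Randall--Winkler, the Cayley graph has no useful ordering, so the natural ``greedy'' coupling does not keep tokens together once they coalesce on the same vertex; the non-Markovian feature of the coupling must be designed to preserve coalescences while still allowing the marginal walks to be genuine copies of $Z_{t}$. Translating total variation control of individual token positions into total variation control of the full configuration of real-valued masses, without losing a prohibitive factor in the union bound, is what forces the delicate constants and the explicit lower bounds on $n$ in the hypothesis.
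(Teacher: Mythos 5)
Your lower bound argument is essentially the paper's. The paper lifts a second eigenvector $v$ of the kernel $K[g,g]=1-\tfrac1n$, $K[g,gr]=\tfrac1{nm}$ (which is $\tfrac12(I+\widehat K)$ and so shares eigenvectors with $\widehat K$) to the linear test functional $\langle X_t,v\rangle$, starts $X_0$ at a point of $\Delta_G$ maximizing this inner product, uses $E[\langle X_{t+1},v\rangle\mid X_t]=(1-\gamma)\langle X_t,v\rangle$, bounds the variance of $\langle Y,v\rangle$ under $U_G$ by $O(n^{-2})$, and applies a threshold/Chebyshev argument. That is precisely your plan, up to the cosmetic choice of $\widehat K$ versus $K$.

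The upper bound is a genuinely different route, and it has a real gap which your own last paragraph names without resolving. The paper never discretizes the mass into particles. It runs the \emph{proportional} coupling for a first phase and proves the $L^2$ contraction $E[\|X_t-Y_t\|_2^2]\le 4n\,e^{-\lfloor t\widehat\gamma/8\rfloor}$ (Lemma \ref{LemmaContractionCayley}) by tracking the autocorrelation vector $S_t^h=\sum_g(X_t[g]-Y_t[g])(X_t[hg]-Y_t[hg])$, showing it evolves under a Markov kernel on $G$ whose Dirichlet form compares to that of $\widehat K$; it then uses the partition process $P_t$, built from the \emph{future} update coordinates of $Y$, to schedule ``subset couplings'' of $\lambda^x,\lambda^y$ at marked times, and shows via Lemmas \ref{LemmaSubsetCoupCayley}, \ref{LemmaSmallness}, \ref{LemmaLargeness}, \ref{LemmaConnectednessCayley} that with high probability every subset coupling succeeds, forcing $X_T=Y_T$ exactly. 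The design of that non-Markovian coupling — choosing $\lambda^x$ as a function of $\lambda^y$ so that a whole block's weight is matched while both marginals stay $U[0,1]$ — is exactly the obstacle you flag, and your proposal supplies no mechanism for it. Two further problems with the token picture as stated: a marked unit of mass does not perform $Z_t$ (under the Gibbs move it is touched with probability $2/n$ and then relocates with conditional probability $\tfrac12$, so it follows a lazier kernel; and independent tokens do not reproduce the deterministic $\lambda$-fraction split of the continuous move); and even if token positions could be matched, passing from ``tokens agree'' to the exact event $X_T=Y_T$ in $\Delta_G$ requires controlling a discretization error that your sketch never quantifies and that the claimed $7n^{4.5-C/6}$ bound would have to absorb. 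In short, the lower bound plan is fine, but the upper bound plan is missing the subset-coupling construction — the actual engine of the proof — and is based on a token heuristic that does not match the dynamics.
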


This substantially generalizes \cite{RaWi05a} and \cite{Smit11}, from samplers corresponding to $G = \mathbb{Z}_{n}$, and $R = \{ 1, -1 \}$ or $R = \mathbb{Z}_{n} \backslash \{ 0 \}$ respectively, to general Cayley graphs. In addition to being of mathematical interest, this process is an example of a gossip process with some geometry, studied by electrical engineers and sociologists interested in how information propagates through networks; see \cite{Shah09} for a survey. \par 

The proof of the upper bound will use an auxilliary chain similar to that found in \cite{RaWi05a}, a coupling argument improved from \cite{Smit11}, and an unusual use of comparison theory from \cite{DiSa93}. The proof of the lower bound is elementary.\par 

The next example consists of narrow contingency tables. Beginning with the work of Diaconis and Efron \cite{DiEf85} on independence tests, there has been interest in finding efficient ways to sample uniformly from the collection of integer-valued matrices with given row and column sums. A great deal of this effort has been based on Markov chain Monte Carlo methods. While some of the efforts have dealt directly with Markov chains on these integer-valued matrices, much recent success, including \cite{DKM97} \cite{Morr02}, has involved using knowledge of Gibbs samplers on convex sets in $\mathbb{R}^{n}$ and clever ways to project from the continuous chain to the desired matrices \cite{Morr00}. \par 

Unfortunately, while the general bounds are polynomial in the number of entries in the desired matrix, they often have a large degree and leading coefficient; see \cite{Lova98}. In this paper, we find some better bounds for very specific cases. Like the paper \cite{Smit11}, this is part of an attempt to make further use of non-Markovian coupling techniques \cite{HaVi03} \cite{Borm11} \cite{BuKo11} \cite{Matt87} and also to expand the small set of carefully analyzed Gibbs samplers \cite{RaWi05a} \cite{RaWi05b} \cite{DKSC08} \cite{DKSC10}. \par 

 We consider the following Gibbs sampler $X_{t}[i,j]$ on the space $M_{n} = \{ X \in \mathbb{R}^{2n}: \sum_{i=1}^{n} X[i, j] = n \, \,  \forall \, \, 1 \leq j \leq 2, \sum_{j=1}^{2} X[i, j] = 2 \, \, \forall \, \, 1 \leq i \leq n, X[i,j] \geq 0  \}$ of nonnegative $n$ by $2$ matrices with column sums fixed to be $n$ and row sums fixed to be $2$. To make a step of the Gibbs sampler, choose two distinct integers $1 \leq i < j \leq n$ and update the four entries $X_{t+1}[i,1]$, $X_{t+1}[i,2]$, $X_{t+1}[j,1]$ and $X_{t+1}[j,2]$ to be uniform conditional on all other entries of $X_{t}$. Let $U_{n}$ be the uniform distribution on $M_{n}$ inherited from Lebesgue measure. Then we find the following reasonable bound on the mixing time of this sampler:

\begin{thm} [Convergence Rate for Narrow Matrices]  Fix $C > 23$ and set $a = \frac{2}{11}(C + 18.25)$. Then, for $n > \max(4096, \frac{2}{11}C + \frac{75}{11})$ satisfying $\frac{n}{\log(n)} > \frac{6(2a-13)(a-7)}{2a - 15}$ and $T > C n \log(n) $, 
\begin{equation*}
\vert \vert \mathcal{L}(X_{T}) - U_{n} \vert \vert_{TV} \leq 11 n^{-\frac{C}{11} + \frac{19}{44}}
\end{equation*}
and conversely for fixed $0<C<1$ with $n$ sufficiently large and $T <  (1-C)n \log(n)$, 
\begin{equation*}
\vert \vert \mathcal{L}(X_{T}) - U_{n} \vert \vert_{TV} \geq 1 - 2n^{-k}
\end{equation*}
\end{thm}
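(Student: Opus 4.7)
The plan is to adapt the non-Markovian coupling technology from the author's earlier work \cite{Smit11} and from Randall-Winkler \cite{RaWi05a}, exploiting the full permutation symmetry of $M_n$. It is convenient to reparameterize: since each row sums to $2$, the chain is equivalent to a chain on the polytope $P_n = \{ x \in [0,2]^n : \sum_i x_i = n \}$, in which a step picks a pair $\{i,j\}$ uniformly at random and resamples $(x_i, x_j)$ uniformly on the segment $\{(y_i, y_j) \in [0,2]^2 : y_i + y_j = x_i + x_j\}$.

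For the upper bound, I would construct a non-Markovian coupling of two copies $X_t, Y_t$ using a common sequence of pair choices $(i_t, j_t)$ and coupling the one-parameter uniform updates through their quantiles, planned globally over the time horizon in the style of \cite{Smit11}. A natural potential is $D_t = \sum_i |X_t[i] - Y_t[i]|$; a pair $\{i,j\}$ is \emph{useful} at time $t$ if $X_t[i] + X_t[j] = Y_t[i] + Y_t[j]$, in which case the monotone coupling equates $X_{t+1}$ and $Y_{t+1}$ on those two coordinates. Forcing the pairwise sums to coincide is analogous to mixing the auxiliary $Z_t$ chain behind Theorem~\ref{ThmConvRateCayley}, so I would invoke the comparison machinery of \cite{DiSa93} to bound the mixing of the sum-statistics in terms of the full chain. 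A coupon-collector estimate then shows that, once the sums have coupled, every pair is refreshed in $O(n \log n)$ steps with high probability; careful bookkeeping of the constants through this coupon-collector tail produces the bound $11 n^{-C/11 + 19/44}$.

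For the lower bound, I would start at the extremal state $X_0 \in M_n$ with $X_0[i,1] = 2$ for $i \leq n/2$ and $X_0[i,1] = 0$ otherwise, and use the test statistic $h(x) = \#\{ i : x[i,1] = 2 \}$. Under $U_n$ every coordinate has a density, so $h = 0$ almost surely, while $h(X_T)$ is at least the number of rows $i \leq n/2$ that have not yet lost the value $2$. A row loses the value $2$ the first time it is paired with a row whose value has already moved off $2$, so the set $S_t = \{ i : X_t[i,1] = 2 \}$ evolves as a pure death process with rate roughly $2 |S_t|(n - |S_t|)/n^2$ per step. First- and second-moment estimates on this death process show that $|S_T| \geq 1$ with probability at least $1 - 2n^{-k}$ for an appropriate $k = k(C)$ whenever $T < (1 - C) n \log n$, giving the stated TV lower bound via $\Vert \mathcal{L}(X_T) - U_n \Vert_{TV} \geq \Pr(h(X_T) \geq 1)$.

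The main obstacle is the upper-bound coupling step. Even after the sequence of pair choices is fixed, the conditional uniform supports $[\max(0, s-2), \min(2, s)]$ depend on the current sum $s$, so the two chains generically see different intervals and the quantile coupling does not equalize them on first contact; coordinates that were already coupled can become un-coupled the next time they are touched, because the sum constraint couples neighboring rows. The fine control needed to show that $D_t$ nevertheless contracts on the right timescale, and to match this contraction with the comparison bound for the sum-statistics, is what forces the delicate numerical conditions in the statement, such as $\tfrac{n}{\log n} > \tfrac{6(2a-13)(a-7)}{2a-15}$ and the precise exponent $\tfrac{19}{44}$.
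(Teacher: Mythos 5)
Your upper bound sketch identifies the right general framework (a two-phase non-Markovian coupling in the style of \cite{Smit11}) but misstates or omits the steps that make it work. The Diaconis--Saloff-Coste comparison of \cite{DiSa93} is used in the paper only for the Cayley-graph sampler of Theorem \ref{ThmConvRateCayley}, where an explicit auxiliary chain on the group $G$ is compared to the slow simple random walk; there is no analogous auxiliary group walk behind the narrow-matrix chain, and the paper does not use comparison theory for it at all. The contraction phase is instead a direct single-step computation (Lemma \ref{LemmaL2Cont}): under the proportional coupling, $E[\Vert X_{t+1}-Y_{t+1}\Vert_2^2 \mid X_t,Y_t] \le (1 - \tfrac{2}{3n})\Vert X_t - Y_t\Vert_2^2$, and this $L^2$ estimate is converted to an $L^1$ tail bound by Markov's inequality (Lemma \ref{LemmaWeakConMat}). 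The paper works in $L^2$ rather than with your $D_t = \sum_i |X_t[i]-Y_t[i]|$ precisely because the clean one-step contraction factor appears only in $L^2$. The coupling phase is also underspecified in your proposal: the paper's mechanism is the partition process of Section \ref{SecPartProc}, which reveals the pair choices $(i_s,j_s)$ over the whole window $[T_1,T]$ in advance, tracks the connected components backward in time, and attempts a subset coupling (forcing $w(X_{t+1},S)=w(Y_{t+1},S)$) only at the marked merge times. The success probability hinges on two quantitative inputs you don't mention: the Closeness bound (inequality \eqref{IneqSmallnessNarrowMats}, which propagates $\Vert X_{T_1}-Y_{T_1}\Vert_1 \le n^{-a}$ forward across partition blocks because block sums are invariant under moves inside a block) and the Largeness bound (Lemma \ref{LemmaLargenessMatrix}, proved by pointwise domination against the simplex sampler of \cite{Smit11}, ensuring no entry comes within $n^{-b}$ of $0$ or $2$ during the coupling window). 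The obstruction you flag in your last paragraph --- coupled coordinates becoming uncoupled when a neighbor is refreshed --- is exactly what the partition process is built to route around: the subset coupling only ever equalizes \emph{block sums}, and those are preserved by every later move inside the block, so nothing comes undone. Without these pieces, the ``refreshed in $O(n\log n)$ steps'' heuristic has nothing to attach to, and the exponent $19/44$ is unreachable.

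Your lower bound is a genuinely different but valid route. The paper lets $\tau$ be the first time every row has been selected, observes that any unselected row forces $X_T$ into the $U_n$-null union of hyperplanes $\bigcup_{i,j}\{X : X[i,j]=X_0[i,j]\}$, and cites the classical coupon-collector tail from \cite{ErRe61}. Your death process on $S_t = \{i : X_t[i,1]=2\}$ is sound --- two $2$-rows paired together must remain at $(2,2)$ because the update constraint forces $X_{t+1}[i,2]+X_{t+1}[j,2]=0$, while a $2$-row paired with a non-$2$ row dies almost surely --- and because the death rate degenerates to the coupon-collector rate once $|S_t|$ is small, both arguments produce the same $\tfrac12 n\log n$ threshold. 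The trade-off is that you must carry out first- and second-moment estimates on $|S_t|$ and start from a specific extremal corner of $M_n$, where the paper can appeal to a classical tail bound from an arbitrary start.
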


\section{General Strategy and the Partition Process} \label{SecPartProc}
Both of our bounds will be obtained using a similar strategy, ultimately built on the classical coupling lemma. We recall that a coupling of Markov chains with transition kernel $K$ is a process $(X_{t}, Y_{t})$ so that marginally both $X_{t}$ and $Y_{t}$ are Markov chains with transition kernel $K$. Although we always couple entire paths $\{ X_{t} \}_{t = 0}^{T}$ and $\{ Y_{t} \}_{t = 0}^{T}$, we often use the shorthand notation of saying that we are coupling $X_{t}$ and $Y_{t}$. In order to describe a coupling, note that for both walks being studied, the evolution of the Markov chain $X_{t}$ can be represented by $X_{t+1} = f(X_{t}, i(t), j(t), \lambda(t))$, where $f$ is a deterministic function, $i(t), j(t)$ are random coordinates (either elements of $[n]$ or of a group $G$), and $\lambda(t)$ is drawn from Lebesgue measure on $[0,1]$. These representations are given in equations \eqref{EqNarrowRepDeltaPos}, \eqref{EqNarrowRepDeltaNeg} and \eqref{EqCayleyMoveRep} respectively. To couple $X_{t}$ and $Y_{t}$, it is thus enough to couple the update variables $i(t)^{\alpha}, j(t)^{\alpha}, \lambda(t)^{\alpha}$, with $\alpha \in \{ x, y \}$, used to construct $X_{t}$ and $Y_{t}$ respectively. \par

Our couplings will provide bounds on mixing times through the following lemma (see \cite{LPW09}, Theorem 5.2 - they work in discrete space, but their proof doesn't rely on this assumption):
\par
\begin{lemma} [Fundamental Coupling Lemma] If $(X_{t}, Y_{t})$ is a coupling of Markov chains, $Y_{0}$ is distributed according to the stationary distribution of $K$, and $\tau$ is a random time with the property that $X_{t} = Y_{t}$ for $t \geq \tau$, then 
\begin{equation*}
\vert \vert \mathcal{L}(X_{t})- \mathcal{L}(Y_{t}) \vert \vert_{TV} \leq P[\tau > t]
\end{equation*}
\end{lemma}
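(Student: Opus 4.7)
The plan is to reduce the claim to a pointwise bound on $P[X_t \in A] - P[Y_t \in A]$ for each measurable set $A$, by exploiting the defining property of $\tau$ that the two coupled paths have literally agreed from time $\tau$ onward. First I would fix an arbitrary measurable set $A$ and write
\begin{align*}
P[X_t \in A] - P[Y_t \in A] &= \bigl( P[X_t \in A, \tau \leq t] - P[Y_t \in A, \tau \leq t] \bigr) \\
&\quad + \bigl( P[X_t \in A, \tau > t] - P[Y_t \in A, \tau > t] \bigr).
\end{align*}
On the event $\{\tau \leq t\}$ the hypothesis $X_s = Y_s$ for all $s \geq \tau$ forces $X_t = Y_t$ pointwise, so the two terms in the first parenthesis are identical and cancel.

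Next I would bound the second parenthesis crudely: drop the $\{Y_t \in A\}$ term (it is nonnegative) and use $P[X_t \in A, \tau > t] \leq P[\tau > t]$. This gives the one-sided estimate $P[X_t \in A] - P[Y_t \in A] \leq P[\tau > t]$. Because $Y_0$ is drawn from the stationary distribution of the common transition kernel $K$, the marginal law $\mathcal{L}(Y_t)$ coincides with $\pi$ for every $t$, so the left-hand side is exactly the difference of $\pi$ with the law of $X_t$ evaluated on $A$; taking the supremum over $A \in \mathcal{A}$ yields the total variation bound exactly as stated (using the asymmetric definition of total variation distance adopted earlier in the excerpt).

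The only subtlety, and what I would flag as the one thing to check carefully in this continuous-state setting, is measurability: one needs $\tau$ to be a genuine random time on the joint probability space so that events like $\{X_t \in A, \tau \leq t\}$ are measurable, and one needs the couplings $(X_t, Y_t)_{t=0}^{T}$ to be defined jointly so that the equality $X_t = Y_t$ on $\{\tau \leq t\}$ is meaningful. Since the couplings constructed later in the paper are built explicitly by coupling the update variables $i(t)^{\alpha}, j(t)^{\alpha}, \lambda(t)^{\alpha}$ on a single product probability space, this is automatic, so no genuine obstacle arises and the proof is two lines once the decomposition is written down.
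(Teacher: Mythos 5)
Your proof is correct and is essentially the standard argument cited by the paper (Theorem 5.2 of Levin--Peres--Wilmer): decompose on $\{\tau \le t\}$ versus $\{\tau > t\}$, use $X_t = Y_t$ on the former, and bound the latter by $P[\tau > t]$ before taking the supremum over measurable $A$. Your measurability remark is a sensible thing to flag in the continuous setting, and as you note it is automatic for the explicit product-space couplings the paper builds.
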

\par

In each chain, then, we begin with $X_{t}$ started at a distribution of our choice, and $Y_{t}$ started at stationarity. For any fixed (large) $T$, we will then couple $X_{t}$ and $Y_{t}$ so that they will have coupled by time $T$ with high probability. Each coupling will have two phases: an initial phase from time $0$ to time $T_{1}$ in which $X_{t}$ and $Y_{t}$ get close with high probability, and a non-Markovian coupling phase from time $T_{1}$ to time $T = T_{1} + T_{2}$ in which they are forced to collide. Unlike many coupling proofs, the time of interest $T$ must be specified before constructing the coupling.  \par
While the initial contraction phases are quite different for the two chains, the final coupling phase can be described in a unified way. The unifying device is the partition process $P_{t}$ on set partitions of $[n] = \{1, 2, \ldots, n \}$, introduced in \cite{Smit11} for a special case of the first sampler treated here. This partition process contains some information about the coordinates $\{ i(t), j(t) \}_{t = T_{1}}^{ T-1}$ used by $Y_{t}$ throughout the entire process, and is the only source of information from the future that is used to construct the non-Markovian coupling. Critically, we don't use any information about the random variables $\lambda(t)$ used at each step, which makes it trivial to check that the couplings constructed in this paper have the correct marginal distributions. \par 
The process $\{ P_{t} \}_{t=T_{1}}^{T}$ will consist of a set of nested partitions of $[n]$, $P_{T_{1}} \leq P_{T_{1} + 1} \leq \ldots \leq P_{T} = \{ \{1 \}, \{ 2 \}, \ldots, \{ n \} \}$, where we say partition $A$ is less than partition $B$ if every element of partition $B$ is a subset of an element of partition $A$. To construct $P_{t}$, we first look at the sequence of graphs $G_{t}$ with vertex set $[n]$ and edge set $\{ (i(s), j(s)) \, : \, s \geq t \} $. Then let $P_{t}$ consist of the connected components of $G_{t}$. While constructing $P_{t}$, we will also record a series of `marked times' $ T-1 = t_{1} > t_{2} > \ldots > t_{m}$ and associated special subsets $S(t_{j}, 1)$ and $S(t_{j},2)$ of $[n]$. We will set $t_{1} = T-1$, and then inductively set $t_{j} = \sup \{ t : \, t < t_{j-1}, P_{t} \neq P_{t+1} \}$. Finally, note that if $P_{t-1} \neq P_{t}$, the only difference between them is that two elements of $P_{t}$ have been merged into a single element in $P_{t-1}$. Label the set merged at time $t_{j}$ with fewer elements $S(t_{j},1)$, and label the other one $S(t_{j},2)$. If both sets have the same number of elements, set $S(t_{j},1)$ to be the one containing the smallest element (this is, of course, quite arbitrary). \par 
We will be interested in the smallest time $\tau$ such that $P_{T - \tau} = [n]$, a single block (set $\tau = \infty$ if $P_{t}$ is never a single block). Lemma 4.2 of \cite{Smit11}, a small adaptation of classical arguments (see e.g. chapter 7 of \cite{Boll01}), says: 

\begin{lemma} [Connectedness] \label{LemmaConnectednessNarrowMat}
Let $\epsilon > 0$ and assume $n>4$ satisfies $\frac{n}{\log(n)} > \frac{3(1 + 2 \epsilon)(\frac{1}{2} + 2 \epsilon}{\epsilon}$. For the Gibbs sampler on narrow matrices,
\begin{equation*}
P \left[\tau > \left( \frac{1}{2} + 2\epsilon \right) n \log(n) \right] \leq 2n^{-\epsilon}
\end{equation*}
\end{lemma}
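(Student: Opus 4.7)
The plan is to identify the event $\{P_{T-t} \neq \{[n]\}\}$ with a classical Erd\H{o}s--R\'enyi-type connectedness event and then apply a standard union-bound argument. By construction, $G_{T-t}$ has edge set $\{(i(s), j(s)) : T - t \leq s < T\}$, and for the narrow-matrix sampler each update pair $(i(s), j(s))$ is i.i.d.\ uniform over the $\binom{n}{2}$ unordered pairs of $[n]$. Hence $G_{T-t}$ is distributed as the random multigraph produced by $t$ i.i.d.\ uniform edge-insertions, and $P_{T-t}$ is the single block $\{[n]\}$ exactly when $G_{T-t}$ is connected.

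Next I would bound disconnectedness by a union bound over \emph{witness cuts}: $G_{T-t}$ is disconnected iff there is a nonempty $S \subsetneq [n]$ with $|S| \leq n/2$ such that no inserted edge crosses $(S, S^{c})$. A single uniform edge avoids such a cut with probability $1 - k(n-k)/\binom{n}{2}$ when $|S| = k$, so by independence over the $t$ steps
\begin{equation*}
P[\tau > t] \;\leq\; \sum_{k=1}^{\lfloor n/2 \rfloor} \binom{n}{k}\left(1 - \frac{k(n-k)}{\binom{n}{2}}\right)^{t}.
\end{equation*}

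It then remains to plug in $t = (\tfrac{1}{2} + 2\epsilon) n \log(n)$ and show the right-hand side is at most $2 n^{-\epsilon}$. Using $1 - x \leq e^{-x}$ produces a factor $\exp(-(1+4\epsilon) k(n-k) \log(n)/(n-1))$ on each term; for $k=1$, combined with $\binom{n}{1} = n$, this gives the dominant contribution $n^{-4\epsilon}$, which already beats the target. For $k \geq 2$ one uses the tighter Stirling bound $\binom{n}{k} \leq (en/k)^{k}$ and splits the sum into small $k$ and $k$ up to $n/2$. The main obstacle is bookkeeping in the intermediate range $k \asymp n/2$, where $\binom{n}{k}$ can be as large as $2^{n}$: the hypothesis $n/\log(n) > 3(1+2\epsilon)(1/2 + 2\epsilon)/\epsilon$ is tailored precisely so that the edge-avoidance factor dominates this entropy for every $k \geq 2$.

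Combining the three ranges (the $k=1$ term, the small-$k$ regime where $\binom{n}{k} \leq n^k$ is sharp enough, and the intermediate-$k$ regime handled by Stirling plus the growth hypothesis) yields the claimed $2 n^{-\epsilon}$ bound. This is loose compared to the sharp classical threshold $\Theta(n^{-4\epsilon})$, but is sufficient for the subsequent coupling argument. No probabilistic input is required beyond the standard arguments of Chapter 7 of \cite{Boll01}; the only adaptation is that edges here are sampled with replacement rather than produced as a simple $m$-edge graph, which only introduces a harmless factor arising from possible edge repetitions.
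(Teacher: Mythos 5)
The paper does not supply its own proof of this lemma; it is quoted from Lemma 4.2 of \cite{Smit11} and described there as a small adaptation of the classical connectivity-threshold argument in chapter 7 of \cite{Boll01}. Your proposal follows exactly that route --- identify $\{\tau > t\}$ with disconnectedness of the multigraph formed by $t$ i.i.d.\ uniform edge insertions and union-bound over witness cuts --- so your approach matches the cited source, and your master inequality
\begin{equation*}
P[\tau > t] \leq \sum_{k=1}^{\lfloor n/2 \rfloor} \binom{n}{k}\left(1 - \frac{k(n-k)}{\binom{n}{2}}\right)^{t}
\end{equation*}
is correct as stated.

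The genuine gap is that you stop precisely where the work lies. The sentence ``combining the three ranges $\ldots$ yields the claimed $2n^{-\epsilon}$ bound'' is an assertion, not a computation, and it is not obviously a consequence of what precedes it. In particular, the two crude substitutions you name do not by themselves cover the sum: Stirling together with the worst-case crossing rate $(n-k)/(n-1) \geq 1/2$ bounds the $k$-th term by $\bigl(e\, n^{1/2 - 2\epsilon}/k\bigr)^{k}$, which is greater than one for every $k \lesssim e\, n^{1/2 - 2\epsilon}$, so that route is only useful for large $k$. For moderate $k$ one must instead keep the exact crossing probability $2k(n-k)/(n(n-1))$ with $\binom{n}{k} \leq n^{k}$, which produces a quadratic-in-$k$ correction $(1+4\epsilon)k(k-1)\log n/(n-1)$ in the exponent; the role of the hypothesis $n/\log n > 3(1+2\epsilon)(\tfrac{1}{2}+2\epsilon)/\epsilon$ is exactly to control that correction, make the two regimes overlap, and cap the resulting sum at $2n^{-\epsilon}$ rather than the heuristic $n^{-4\epsilon}$. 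You should carry out this comparison explicitly, since this is where the stated constant $2$ and the stated exponent $\epsilon$ are actually produced. Finally, a minor inaccuracy: your closing remark that sampling with replacement ``introduces a harmless factor arising from possible edge repetitions'' is not quite right --- no correction is needed at all, since the union bound is taken over the event that none of $t$ independent uniform draws crosses a fixed cut, and that event has probability exactly $\bigl(1 - k(n-k)/\binom{n}{2}\bigr)^{t}$ regardless of repetitions.
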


The analogous lemma for the other example will be proved in Section \ref{SecCoupForCayley}, Lemma \ref{LemmaConnectednessCayley}. \par 

For both of our walks, we will use two types of coupling, the `proportional' coupling and the `subset' coupling. In both cases, we will set $i(t)^{x} = i(t)^{y}$ and $j(t)^{x} = j(t)^{y}$ at each step. In the proportional coupling, we will also set $\lambda(t)^{x} = \lambda(t)^{y}$. \par 
To discuss the subset coupling, we must define the weight of $X_{t}$ on a subset $S \subset [n]$, which we call $w(X_{t}, S)$. For the simplex walk, we define $w(X_{t},S) = \sum_{s \in S} X_{t}[s]$. For narrow matrices, we define $w(X_{t}, S) = \sum_{s \in S} X_{t}[s,1]$.  The subset couplings associated with subset $S \subset [n]$ is defined immediately prior to Lemma  \ref{LemmaSubsetCoupCayley} in terms of the walk on the simplex (the coupling for the walk on narrow matrices is identical, but uses the representation of that walk in equations \eqref{EqNarrowRepDeltaPos} and \eqref{EqNarrowRepDeltaNeg} rather than the representation for the simplex walk given in equation \eqref{EqCayleyMoveRep}). Roughly, the subset coupling at time $t$ will often set $w(X_{t+1}, S) = w(Y_{t+1}, S)$. We say that a subset coupling of subset $S$ at time $t$ succeeds if that equality holds; otherwise, we say it fails. \par 
In each case, the coupling of $X_{t}$ and $Y_{t}$ during the non-Markovian coupling phase will be as follows. At marked times $t_{j}$, we will perform a subset coupling of $X_{t_{j}}$, $Y_{t_{j}}$ with respect to $S(t_{j}, 1)$. At all other times, we will perform a proportional coupling. This leads to:
\begin{lemma} [Final Coupling] Assume the non-Markovian coupling phase lasts from time $T_{1}$ to $T$, that $P_{T_{1}} = \{ [n] \}$, and that all subset couplings succeed. Then $X_{T} = Y_{T}$.
\end{lemma}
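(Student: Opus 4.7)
The natural strategy is induction on $t$, running forward from $T_{1}$ to $T$, with the invariant
\begin{equation*}
\text{for every block } B \in P_{t}, \quad w(X_{t}, B) = w(Y_{t}, B).
\end{equation*}
Both endpoints are immediate: at $t = T_{1}$ the partition is $\{[n]\}$ and the invariant reduces to equality of the conserved total mass ($1$ in the simplex case, $n$ for the narrow-matrix column), which holds since $X_{T_{1}}$ and $Y_{T_{1}}$ lie in the same constraint set. At $t = T$ the partition is into singletons, so the invariant gives $X_{T}[k] = Y_{T}[k]$ for every coordinate $k$; in the narrow-matrix setting the row-sum-$2$ constraint then forces equality of the second column as well, so in both models we conclude $X_{T} = Y_{T}$.

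For the inductive step I would first isolate a structural fact common to both chains: an elementary update at coordinates $(i(t), j(t))$ redistributes mass between positions $i(t)$ and $j(t)$ while preserving their pairwise weight $w(X_{t}, \{i(t),j(t)\})$ (in the narrow-matrix representation, fixing all other entries together with the prescribed column sum $n$ pins down $X[i,1] + X[j,1]$). Consequently $w(X_{t+1}, S) = w(X_{t}, S)$, and similarly for $Y_{t}$, whenever $S$ contains both or neither of $i(t), j(t)$. At a non-marked time $t$, $P_{t+1} = P_{t}$ and the edge $(i(t), j(t))$ sits inside a single block $B$ of $P_{t}$, so every block of $P_{t+1}$ contains both endpoints or neither, and the invariant transports forward automatically. (Notice that this step uses only the marginal property of the coupling, not any specific feature of the proportional coupling.)

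At a marked time $t_{j}$, a block $B \in P_{t_{j}}$ splits into $S(t_{j},1) \sqcup S(t_{j},2) \in P_{t_{j}+1}$ and the edge $(i(t_{j}), j(t_{j}))$ crosses the cut. Blocks of $P_{t_{j}+1}$ other than these two halves coincide with blocks of $P_{t_{j}}$ and are untouched by the update, so the invariant survives on them trivially. The successful subset coupling at $t_{j}$ gives $w(X_{t_{j}+1}, S(t_{j},1)) = w(Y_{t_{j}+1}, S(t_{j},1))$ by definition; subtracting this from $w(X_{t_{j}+1}, B) = w(Y_{t_{j}+1}, B)$, which follows from the inductive hypothesis on $B \in P_{t_{j}}$ together with the conservation noted above, yields the matching equality on $S(t_{j},2)$ and closes the induction. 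There is no real obstacle here: the entire argument is a bookkeeping induction, and its cleanness is precisely the reason for packaging the combinatorics into the partition process $P_{t}$ and for articulating the `success' predicate for subset couplings in advance.
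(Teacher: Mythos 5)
Your proof is correct and follows essentially the same route as the paper's: the same forward induction on $t$ with the invariant that $w(X_t, B) = w(Y_t, B)$ for every block $B \in P_t$, the same handling of unmarked times via conservation of pairwise weight within a block (including the observation that this step is coupling-agnostic), and the same subtraction argument at marked times using the success of the subset coupling on $S(t_j,1)$. You spell out a couple of details the paper leaves implicit — notably that $w(X_{t_j+1}, B) = w(Y_{t_j+1}, B)$ still holds at the marked time by conservation plus the inductive hypothesis, and that the row-sum constraint recovers the second column in the narrow-matrix case — but these are elaborations, not a different approach.
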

\begin{proof} Let $\mathcal{F}_{t}$ denote the collection of equations $w(X_{t}, S) = w(Y_{t}, S)$ for all $S \in P_{t}$. We will show by induction on $t$ that the equations $\mathcal{F}_{t}$ hold for all $T_{1} \leq t \leq T$.
At time $T_{1}$, we have $w(X_{T_{1}}, [n]) = w(Y_{T_{1}}, [n]) = 1$. By definition of the partition process, if $t$ is not a marked time and all equations $\mathcal{F}_{t}$ hold, then all equations $\mathcal{F}_{t+1}$ also hold. In fact, this is true for any coupling of $\lambda(t)^{x}, \lambda(t)^{y}$ at that step, not just the proportional coupling. \par 
Assume $t = t_{j}$ is a marked time, and that the equations $\mathcal{F}_{t_{j}}$ hold. Then if the equations $\mathcal{F}_{t_{j}+1}$ don't all hold, we must have that either $w(X_{t_{j}+1}, S(t_{j},1)) \neq w(Y_{t_{j}+1}, S(t_{j},1))$ or $w(X_{t_{j}+1}, S(t_{j},2)) \neq w(Y_{t_{j}+1}, S(t_{j},2))$, since none of the terms in the other equations change. By assumption, all subset couplings have succeeded, so $w(X_{t_{j}+1}, S(t_{j},1)) = w(Y_{t_{j}+1}, S(t_{j},1))$. By construction, \\* $w(X_{t_{j}+1}, S(t_{j},2)) = w(X_{t_{j}+1}, S(t_{j},1) \cup S(t_{j},2)) - w(Y_{t_{j}+1}, S(t_{j},1))$ and similarly for $Y_{t_{j}+1}$, so  $w(X_{t_{j}+1}, S(t_{j},2)) = w(Y_{t_{j}+1}, S(t_{j},2))$. Thus, the inductive claim has been proved. \par 
Finally, we note that if  $w(X_{t}, \{ i \}) = w(Y_{t}, \{ i \}) $ for any singleton $\{ i \}$, then $X_{t}[i] = Y_{t}[i]$ for the sampler on the simplex (respectively $X_{t}[i,j] = Y_{t}[i,j]$ for $j \in \{ 1, 2 \}$ for the other sampler). Since $P_{T} = \{ \{ 1 \}, \{2 \}, \ldots, \{ n \} \}$, this proves the lemma. 
\end{proof} \par 
So, in both cases, to show that coupling has succeeded, it is sufficient to show that all subset couplings succeed with high probability.

\section{Contraction for Gibbs Samplers on the Simplex with Geometry}
In this section, we prove a contraction lemma for Gibbs samplers on the simplex associated with a group $G$ and symmetric generating set $R$ of $G$ (that is, $R^{-1} = R$), where $\vert G \vert = n$, $\vert R \vert = m$, and $id$ is the identity element of $G$. We recall briefly some definitions. We write $\Delta_{G} = \{ X \in \mathbb{R}^{G} \vert X[g] \geq 0, \sum_{g \in G} X[g] = 1 \}$. If $X_{t} \in \Delta_{G}$ is a copy of the Markov chain, we take a step by choosing $g \in G$, $r \in R$ and $\lambda \in [0,1]$ uniformly and setting $X_{t+1}[g] = \lambda(X_{t}[g] + X_{t}[gr])$, $X_{t+1}[gr] = (1- \lambda)X_{t}[g] + X_{t}[gr])$, and for all other entries $X_{t+1}[h] = X_{t}[h]$. This walk is closely related to a slow simple random walk on the group. In particular, we let $Z_{t} \in G$ be the random walk that evolves by choosing at each time step a group element $g \in G$ and generator $r \in R$ uniformly at random, and setting $Z_{t+1} = Z_{t}r$ if $Z_{t} = g$, and $Z_{t+1} = Z_{t}$ otherwise. \par
Let $\widehat{K}$ be the transition kernel associated with the random walk $Z_{t}$. Since $R$ is symmetric, the random walk is reversible, so $\widehat{K}$ can be written in a basis of orthogonal eigenvectors with real eigenvalues $1 = \widehat{\lambda_{1}} > \widehat{\lambda_{2}} \geq \ldots \geq \widehat{\lambda_{n}} \geq -1$. Since it is $\frac{1}{2}$-lazy, all eigenvalues are in fact nonnegative. Let $\widehat{\gamma} = 1 - \widehat{\lambda_{2}}$ be the spectral gap of $\widehat{K}$. In this section we will show that

\begin{lemma} [Contraction Estimate for Gibbs Sampler on Cayley Graphs] \label{LemmaContractionCayley}
Let $X_{t}$, $Y_{t}$ be two copies of the Gibbs sampler on the simplex associated with $G$ and $R$, with joint distribution given by a proportional coupling at each step. Then
\begin{equation*}
E[\vert \vert X_{t} - Y_{t} \vert \vert_{2}^{2}] \leq  4 n e^{- \lfloor \frac{t \widehat{\gamma}}{8} \rfloor}
\end{equation*}
\end{lemma}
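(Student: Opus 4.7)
The plan is to study the pointwise difference $D_t := X_t - Y_t$ under the proportional coupling and show that $\|D_t\|_2^2$ contracts geometrically at a rate set by $\widehat{\gamma}$. Because $\lambda(t)^x = \lambda(t)^y$ and the coordinates $g,r$ are shared, the update is purely linear in $D_t$: if $(g,r,\lambda)$ is drawn, then $D_{t+1}[g] = \lambda(D_t[g]+D_t[gr])$, $D_{t+1}[gr] = (1-\lambda)(D_t[g]+D_t[gr])$, and all other coordinates are unchanged. Since $X_t$ and $Y_t$ are probability vectors, $D_t$ is always orthogonal to the all-ones vector $\mathbf{1}$, which will be the crucial input for invoking a spectral estimate.

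The core computation is a one-step Lyapunov estimate. Using $E[\lambda^2] = E[(1-\lambda)^2] = 1/3$, the contribution of edge $(g,gr)$ to $E[\|D_{t+1}\|_2^2 \mid D_t]$ is $\tfrac{2}{3}(D_t[g]+D_t[gr])^2$, while before the step that edge contributed $D_t[g]^2+D_t[gr]^2$. Averaging over the uniform choice $(g,r)\in G\times R$ and expanding the squares, the diagonal terms collect into a multiple of $\|D_t\|_2^2$, and the cross term $\tfrac{1}{nm}\sum_{g,r}D_t[g]D_t[gr]$ is a quadratic form in the non-lazy Cayley walk kernel $L$. Substituting the identity $\widehat{K} = (1-1/n)I + (1/n)L$, the computation collapses to
\begin{equation*}
E[\|D_{t+1}\|_2^2 \mid D_t] = \Big(1+\tfrac{2}{3n}\Big)\|D_t\|_2^2 - \tfrac{4}{3}\,\langle D_t,(I-\widehat{K})D_t\rangle.
\end{equation*}

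Next I would invoke the spectral gap: since $D_t\perp\mathbf{1}$ and the $\tfrac{1}{2}$-lazy reversible kernel $\widehat{K}$ has nonnegative spectrum, one has $\langle D_t,(I-\widehat{K})D_t\rangle \geq \widehat{\gamma}\|D_t\|_2^2$. This gives the one-step contraction $E[\|D_{t+1}\|_2^2 \mid D_t] \leq (1+\tfrac{2}{3n}-\tfrac{4\widehat{\gamma}}{3})\|D_t\|_2^2$. Iterating, using $1+x\leq e^x$, and using the crude initial bound $\|D_0\|_2^2 \leq 4n$ (from $|D_0[g]|\leq 1$) yields the claimed inequality once the integer rounding and the $O(1/n)$ correction are absorbed into the rate.

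The main technical obstacle is that the laziness of $\widehat{K}$ produces a positive $\tfrac{2}{3n}\|D_t\|_2^2$ correction in the one-step estimate, so a mild lower bound of the form $\widehat{\gamma}\gtrsim 1/n$ is needed in order for the contraction to be effective; passing from the natural rate $\tfrac{4\widehat{\gamma}}{3}$ that the computation produces to the cleaner $\widehat{\gamma}/8$ in the statement requires only constant slack, but that slack must be allocated carefully to accommodate the floor in the exponent.
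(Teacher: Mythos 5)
Your one-step Lyapunov computation is correct: under the proportional coupling, with $D_t = X_t - Y_t$ and $L$ the nonlazy Cayley kernel,
\begin{equation*}
E\bigl[\|D_{t+1}\|_2^2 \mid D_t\bigr] \;=\; \Bigl(1 - \tfrac{2}{3n}\Bigr)\|D_t\|_2^2 + \tfrac{4}{3n}\langle D_t, L D_t\rangle \;=\; \Bigl(1 + \tfrac{2}{3n}\Bigr)\|D_t\|_2^2 - \tfrac{4}{3}\langle D_t, (I - \widehat{K})D_t\rangle,
\end{equation*}
and since $D_t \perp \mathbf{1}$ the Poincar\'e inequality gives $E[\|D_{t+1}\|_2^2 \mid D_t] \leq (1 + \tfrac{2}{3n} - \tfrac{4\widehat{\gamma}}{3})\|D_t\|_2^2$. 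The gap in your argument is that this is not a contraction unless $\widehat{\gamma} \gtrsim 1/n$, and that bound is simply not available for general Cayley graphs: since $\widehat{K} = (1-\tfrac1n)I + \tfrac1n L$, one has $\widehat{\gamma} = \tfrac1n(1 - \lambda_2(L)) \leq \tfrac{2}{n}$ always, but for the cycle $\ZZ_n$ with $R = \{\pm 1\}$ (precisely the Randall--Winkler case the theorem is meant to generalize) $\widehat{\gamma} \sim 1/n^3$. You flag this as requiring ``only constant slack'' to be ``allocated carefully,'' but it is not a constant-factor issue: the additive term $\tfrac{2}{3n}\|D_t\|_2^2$ is of a strictly larger order than $\widehat{\gamma}\|D_t\|_2^2$ in the regime of interest. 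Indeed, taking $D_t$ proportional to a low-frequency eigenvector of $L$ shows that $E[\|D_{t+1}\|_2^2 \mid D_t]$ can genuinely exceed $\|D_t\|_2^2$ in a single step of the proportional coupling, so no one-step Lyapunov argument tracking $\|D_t\|_2^2$ alone can succeed.

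The paper's proof works precisely because it tracks more information than $\|D_t\|_2^2$. It introduces the full autocorrelation vector $S_t^h = \sum_{g} D_t[g]\,D_t[hg]$ indexed by $h \in G$, shows that (after the mild rescaling $U_t^{id} = \tfrac12 S_t^{id}$) the conditional expectation of $U_{t+1}$ given $\mathcal{F}_t$ is $K U_t$ for a genuine Markov kernel $K$ on $G$, and then compares the Dirichlet form of $K$ to that of $\widehat{K}$ to conclude $\gamma(K) \geq \tfrac18\widehat{\gamma}$. The decay of $S_t^{id} = \|D_t\|_2^2$ then comes from the operator-norm contraction of $K$ on the subspace orthogonal to its stationary vector, together with the identity $\sum_h S_t^h = 0$ (which places the whole vector $S_t$ in that subspace) and the Cauchy--Schwarz bound $|S_0^h| \leq S_0^{id}$. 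Informally, the extra coordinates $S_t^h$, $h \neq id$, record the Fourier profile of $D_t$; it is the contraction of this whole profile, not of the scalar $\|D_t\|_2^2$, that takes place at rate $\widehat{\gamma}$. To repair your argument you would need to introduce some analogue of this richer state, or otherwise control $\langle D_t, L D_t\rangle$ over time; the scalar recursion by itself cannot close.
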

\begin{proof} We will construct an auxilliary Markov chain on $G$ associated with $X_{t}$, and compare it to the standard random walk $Z_{t}$. Let $X_{t}$, $Y_{t}$ be two copies of the walk, and couple them at each step with the proportional coupling. For $h \in G$, let $S_{t}^{h} = \sum_{g \in G} (X_{t}[g] - Y_{t}[g])(X_{t}[hg] - Y_{t}[hg])$. We will analyze the evolution of the vector $S_{t} = (S_{t}^{id}, \ldots)$. \par
There are three cases to analyze: $h \notin R$ and $h \neq id$, $h \in R$ and $h \neq id$, and $h = id$. Let $\mathcal{F}_{t}$ be the $\sigma$-algebra generated by $X_{s}$ and $Y_{s}$, $0 \leq s \leq t$. For case 1, we have
\begin{align*}
E[& S_{t+1}^{h} \vert \mathcal{F}_{t}] = \left(1 - \frac{4}{n} + \frac{2}{mn} \right) S_{t}^{h}\\
&+ \frac{1}{2mn} \sum_{i \in G} \sum_{r \in R, r \ne h, h^{-1}} [ (X_{t}[i] + X_{t}[ri] - Y_{t}[i] - Y_{t}[ri])(X_{t}[hi] - Y_{t}[hi]) \\
&+ (X_{t}[ri] + X_{t}[i] - Y_{t}[ri] - Y_{t}[i])(X_{t}[hri] - Y_{t}[hri]) \\ &+(X_{t}[h^{-1}i] - Y_{t}[h^{-1}i])(X_{t}[i] + X_{t}[ri] - Y_{t}[i] - Y_{t}[ri]) \\
&+ (X_{t}[h^{-1}ri] - Y_{t}[h^{-1}ri])(X_{t}[i] + X_{t}[ri] - Y_{t}[i] - Y_{t}[ri])] \\
&+  \frac{2}{mn} \sum_{i \in G} [ \frac{1}{6} (X_{t}[i] + X_{t}[hi] - Y_{t}[i] - Y_{t}[hi])^{2} \\
&+ (X_{t}[i] - Y_{t}[i])(X_{t}[hi] - Y_{t}[hi]) + (X_{t}[i] - Y_{t}[i])(X_{t}[h^{2}i] - Y_{t}[h^{2}i])] \\
&= \left( 1 - \frac{2}{n} + \frac{2}{3mn} \right)S_{t}^{h} + \frac{2}{3mn} S_{t}^{id} + \frac{2}{mn} S_{t}^{h^{2}} \\
& + \frac{1}{2mn} \sum_{r \in R, r \ne h, h^{-1}} (S_{t}^{hr^{-1}} + S_{t}^{hr} + S_{t}^{rh} + S_{t}^{rh^{-1}})
\end{align*} \par 
and we note that the sum of the coefficients is $1 - \frac{2}{3mn}$. For case 2, we have
\begin{align*}
E[& S_{t+1}^{h} \vert \mathcal{F}_{t}] = \left( 1 - \frac{4}{n} \right) S_{t}^{h} + \frac{2m}{mn} S_{t}^{h} \\
&+ \frac{1}{2mn} \sum_{r \in R} (S_{t}^{hr^{-1}} + S_{t}^{hr} + S_{t}^{rh} + S_{t}^{rh^{-1}}) \\
&=  \left( 1 - \frac{2}{n} \right) S_{t}^{h} + \frac{1}{2mn} \sum_{r \in R} (S_{t}^{hr^{-1}} + S_{t}^{hr} + S_{t}^{rh} + S_{t}^{rh^{-1}})
\end{align*} \par 
where the sum of the coefficients is 1. Finally, in case 3, we have
\begin{align*}
E[&S_{t+1}^{id} \vert \mathcal{F}_{t}] = \left( 1 - \frac{2}{n} \right) S_{t}^{id} + \frac{2}{3mn} \sum_{r \in R} \sum_{i \in G} (X_{t}[i] + X_{t}[ri] - Y_{t}[i] - Y_{t}[ri])^{2} \\
&= \left( 1 - \frac{2}{3n} \right)S_{t}^{id} + \frac{4}{3mn} \sum_{r \in R} S_{t}^{r}
\end{align*} 
and here the sum of the coefficients is $1 + \frac{2}{3n}$. If we rewrite $U_{t}^{id} = \frac{1}{2} S_{t}^{id}$, and $U_{t}^{g} = S_{t}^{g}$ for $g \neq id$, and $U_{t} = (U_{t}^{id}, \ldots)$, then we find the following transformations. For case 1, we have
\begin{align} \label{EqNewChain1}
 E[& U_{t+1}^{h} \vert \mathcal{F}_{t}] =  \left( 1 - \frac{2}{n} + \frac{2}{3mn} \right)U_{t}^{h} + \frac{4}{3mn} U_{t}^{id} + \frac{2}{mn} U_{t}^{h^{2}} \\
& + \frac{1}{2mn} \sum_{r \in R, r \ne h, h^{-1}} (U_{t}^{hr^{-1}} + U_{t}^{hr} + U_{t}^{rh} + U_{t}^{rh^{-1}}) \nonumber
\end{align} \par 
For case 2, we have
\begin{align} \label{EqNewChain2}
E[& U_{t+1}^{h} \vert \mathcal{F}_{t}] =  \left( 1 - \frac{2}{n} \right) U_{t}^{h} + \frac{1}{2mn} \sum_{r \in R} (U_{t}^{hr^{-1}} + U_{t}^{hr} + U_{t}^{rh} + U_{t}^{rh^{-1}})
\end{align}
Finally, in case 3, we have
\begin{align} \label{EqNewChain3}
E[&U_{t+1}^{id} \vert \mathcal{F}_{t}] =  \left( 1 - \frac{2}{3n} \right)U_{t}^{id} + \frac{2}{3mn} \sum_{r \in R} U_{t}^{r}
\end{align} 
where the sum of the coefficients is now 1 in all three cases. Since the coefficients are now all nonnegative numbers summing to 1, the equations \eqref{EqNewChain1} to \eqref{EqNewChain3} define the transition kernel of a Markov chain on $G$. From equation \eqref{EqNewChain3}, this chain sends the identity to itself with probability $1 - \frac{2}{3n}$, and to a uniformly chosen element of $R$ with the remaining probability; Equations \eqref{EqNewChain1} and \eqref{EqNewChain2} describe transitions from $h \in R$ and $h \neq id, h \notin R$ respectively. Call the transition kernel $K$. \par 
Before analyzing the chain, we note that $\sum_{i \in G} (X_{t}[i] - Y_{t}[i]) = 0$, and so
\begin{align*}
0 &= \left( \sum_{i \in G} (X_{t}[i] - Y_{t}[i]) \right)^{2} \\
&= \sum_{i \in G} (X_{t}[i] - Y_{t}[i])^{2} + \sum_{i \ne j} (X_{t}[i] - Y_{t}[i])(X_{t}[j] - Y_{t}[j]) \\
&= S_{t}^{id} + \sum_{h \ne id} S_{t}^{h}
\end{align*} \par 
From this calculation, if $\langle v, (2,1,1,\ldots,1) \rangle = 0$, then $\langle Kv, (2,1,1, \ldots, 1) \rangle = 0$ as well. By direct computation, $\pi = \frac{1}{n+1} (2, 1,1, \ldots, 1)$ is a reversible measure for $K$. It is also clear that the distribution $\widehat{\pi} = \frac{1}{n} (1,1,\ldots,1)$ is the reversible measure for $\widehat{K}$.\par 
We are now ready to compare the chains. Recall from \cite{DGJM06} that the Dirichlet form associated to a Markov chain with transition kernel $Q$ and stationary distribution $\nu$ is given by
\begin{equation*}
\mathcal{E} (\phi) = \frac{1}{2} \sum_{h,g \in G} \nu(g) Q(g,h) (\phi(g) - \phi(h))^{2}
\end{equation*}
Let $\mathcal{E}$ and $\widehat{\mathcal{E}}$ be the Dirichlet forms associated with $K$ and $\widehat{K}$ respectively. Then by comparing terms, it is clear that $\mathcal{E} (\phi) \geq \frac{1}{4} \widehat{\mathcal{E}}(\phi)$ for any $\phi$ and $\frac{\pi}{\widehat{\pi}}$, $\frac{\widehat{\pi}}{\pi} \leq 2$. By e.g. Lemma 13.12 of \cite{LPW09}, this implies $\gamma \geq \frac{1}{8} \widehat{\gamma}$. \par 
Recall that if $\langle v, \pi \rangle = 0$, then $\langle K^{m} v, \pi \rangle = 0$ as well. In particular $K$ applied to the subspace orthogonal to $\pi$ has $L^{2} \rightarrow L^{2}$ operator norm at most $1 - \gamma$. Thus, we have for any $v$ in that subspace
\begin{equation*}
\vert \vert K^{m} v \vert \vert_{2} \leq e^{- \lfloor \gamma m \rfloor} \vert \vert v \vert \vert_{2}
\end{equation*}
going back to our original situation, we are interested in the vector $(S_{t}^{g})$. At time 0, $S_{0}^{id} \leq 2$, and by Cauchy-Schwarz $\vert S_{0}^{h} \vert \leq S_{0}^{id}$. \par 

Thus, $\vert \vert U_{0} \vert \vert_{1} \leq 2n$, and of course $\vert S_{t}^{id} \vert \leq \vert \vert S_{t} \vert \vert_{1} \leq 2 \vert \vert U_{t} \vert \vert_{1}$. So we find that
\begin{equation*}
E[\vert S_{t}^{id} \vert] \leq 4n e^{- \lfloor \frac{ t \widehat{\gamma} }{8} \rfloor}
\end{equation*}
which is the contraction estimate in Lemma \ref{LemmaContractionCayley}. \end{proof}

\section{Coupling for Gibbs Samplers on the Simplex with Geometry} \label{SecCoupForCayley}
Having shown contraction, we must now show convergence in total variation distance. First, the analogue to Lemma \ref{LemmaConnectednessNarrowMat}:
\begin{lemma} [Connectedness for Gibbs Sampler on Cayley Graphs] \label{LemmaConnectednessCayley}
Let $\tau$ be as defined immediately before Lemma \ref{LemmaConnectednessNarrowMat} and let $\widehat{\gamma}$ be as defined immediately before Theorem \ref{ThmConvRateCayley}. Then for $t > 8 \frac{(C+3) \log(n)}{\widehat{\gamma}}$, we have
\begin{equation*}
P[\tau > t] \leq 2n^{-C}
\end{equation*}
\end{lemma}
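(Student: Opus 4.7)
The plan is to rephrase $\{\tau > t\}$ as the disconnectedness of a random subgraph of the Cayley graph $\Gamma(G, R)$, and then to run a union bound over cuts in which each term is controlled by the spectral gap $\widehat\gamma$. By the construction of $P_{t}$, the event $\{\tau > t\}$ is precisely the event that the graph $G_{T-t}$ on vertex set $G$, with edges $(g(s), g(s)r(s))$ for $s = T-t, \ldots, T-1$, is disconnected. Since the updates $(g(s), r(s))$ are i.i.d.\ uniform on $G \times R$ and $R = R^{-1}$, these draws select each undirected edge of $\Gamma(G, R)$ with probability $2/(nm)$. So the problem reduces to showing that $t$ i.i.d.\ uniform edges of $\Gamma(G, R)$ fail to connect $G$ with probability at most $2n^{-C}$.

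For a fixed $S \subsetneq G$ with $1 \leq |S| = k \leq n/2$, I would lower bound the probability that a single draw crosses the cut $(S, S^c)$ using the Rayleigh-quotient inequality $\widehat\gamma \cdot \mathrm{Var}_{\widehat\pi}(f) \leq \widehat{\mathcal{E}}(f)$ applied to $f = \mathbf{1}_S$. A short computation from $\widehat{K}(g, gr) = 2/(nm)$ for $r \in R$ gives $\widehat{\mathcal{E}}(\mathbf{1}_S) = 2 e(S, S^c)/(n^2 m)$, where $e(S, S^c)$ is the number of undirected edges of $\Gamma(G, R)$ crossing the cut, while $\mathrm{Var}_{\widehat\pi}(\mathbf{1}_S) = k(n-k)/n^2$. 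Rearranging yields $e(S, S^c) \geq \widehat\gamma\, m\, k(n-k)/2$, so a single random edge crosses the cut with probability at least $\widehat\gamma k(n-k)/n \geq \widehat\gamma k/2$. By independence of the $t$ draws, no edge crosses the cut with probability at most $e^{-t\widehat\gamma k/2}$.

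A union bound over nontrivial subsets of $G$ then gives
\[
P[\tau > t] \leq \sum_{k=1}^{\lfloor n/2 \rfloor} \binom{n}{k} e^{-t\widehat\gamma k/2} \leq \sum_{k \geq 1} \exp\!\big(k(\log n - t\widehat\gamma/2)\big).
\]
When $t > 8(C+3)\log(n)/\widehat\gamma$ we have $t\widehat\gamma/2 > 4(C+3)\log n$, so each exponent is at most $-k(4C+11)\log n$, and the resulting geometric series is comfortably bounded by $2n^{-C}$ for $n \geq 2$. The only step with real content is the Rayleigh-quotient estimate tying $\widehat\gamma$ to the edge boundary of the Cayley graph (with the normalizations of $\widehat{K}$ and $\widehat\pi$ tracked correctly); everything else is a routine Erd\H{o}s--R\'enyi-style calculation, and the constants in the hypothesis are loose enough that sharpness of the Cheeger bound is not an issue.
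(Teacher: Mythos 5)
Your proof is correct, but it takes a genuinely different route from the paper. The paper reuses its contraction estimate (Lemma~\ref{LemmaContractionCayley}): for each pair of vertices $x,y$ it runs two coupled copies of the Gibbs sampler from $\delta_x$ and $\delta_y$, observes that if $x$ and $y$ remain in distinct components of $G_t$ then $\sum_g |X_t[g]-Y_t[g]|^2 \geq 4/n$, applies Markov's inequality, and union-bounds over all $\binom{n}{2}$ pairs to obtain $P[\tau>t] \leq n^3 e^{-\lfloor t\widehat\gamma/8\rfloor}$. Your argument bypasses the coupled Gibbs samplers entirely and works directly on the random graph: you use the Rayleigh-quotient (Cheeger-type) bound with the test function $\mathbf{1}_S$ to convert the spectral gap $\widehat\gamma$ into a lower bound $e(S,S^c) \geq \widehat\gamma\,m\,k(n-k)/2$ on every edge boundary, and then do a standard union bound over cuts. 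Your calculation of $\widehat{\mathcal{E}}(\mathbf{1}_S)$, $\mathrm{Var}_{\widehat\pi}(\mathbf{1}_S)$, the per-step cut-crossing probability $\widehat\gamma k/2$, and the geometric series are all correct, and the final bound you get is in fact much sharper than $2n^{-C}$. What the paper's approach buys is economy: the same Lemma~\ref{LemmaContractionCayley} serves both the contraction phase and the connectedness estimate, so no separate isoperimetric computation is needed. What yours buys is transparency and strength: it is self-contained, elementary, makes the role of the spectral gap as a graph-expansion quantity explicit, and avoids the loss from the floor $\lfloor t\widehat\gamma/8\rfloor$ and the factor-of-$8$ slack in the Dirichlet comparison, giving a substantially smaller failure probability for the same hypothesis on $t$.
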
 
\begin{proof}
We consider a graph-valued process $G_{t}$, where $G_{0}$ is a graph with no edges, and vertex set equal to the group $G$. To construct $G_{t+1}$ from $G_{t}$, choose elements $g \in G$ and $r \in R$ uniformly at random, and add the edge $(g, gr)$ if it isn't already in $G_{t}$. We note that $\tau > t$ if and only if $G_{t}$ is not connected, so we would like to estimate the time at which $G_{t}$ becomes connected. \par 
First, fix two elements $x, y \in G$. We'd like to see if $x$, $y$ are in the same component of $G_{t}$. To do so, let $X_{t}$, $Y_{t}$ be two copies of the Gibbs sampler described in the last section, with $X_{0} = x$, $Y_{0} = y$. Couple $X_{t}$, $Y_{t}$ and $G_{t}$ by the proportional coupling. Then assume $x$, $y$ are in different components $C_{x}$, $C_{y}$ at time $t$. We would have
\begin{align*}
\sum_{g} \vert X_{t}[g] - Y_{t}[g] \vert^{2} &\geq \sum_{g \in C_{x}} \frac{1}{\vert C_{x}\vert^{2}} +  \sum_{g \in C_{y}} \frac{1}{\vert C_{y}\vert^{2}} \\
&\geq \frac{4}{n}
\end{align*}
By Markov's inequality, then,
\begin{equation*}
P[C_{x} \neq C_{y}] \leq \frac{n}{4} E[\sum_{g} \vert X_{t} - Y_{t} \vert^{2}] 
\end{equation*}
and so, by standard union bound for fixed $x$ over all $y$, if $A_{t}$ is the event that $G_{t}$ is disconnected,
\begin{align*}
P[A_{t}] &\leq \frac{n^{2}}{4} \sup_{\mu, \nu} E[\sum_{g} \vert X_{t} - Y_{t} \vert^{2} \vert X_{0} = \mu, Y_{0} = \nu] \\
&\leq n^{3} e^{- \lfloor\frac{ t \widehat{\gamma}}{8} \rfloor}
\end{align*}
where the last inequality is due to Lemma \ref{LemmaContractionCayley}.
\end{proof} \par 
Next, we define subset couplings and discuss success probabilities for this walk. Fix points $X_{t}$, $Y_{t}$, subset $S \subset[n]$ and updated coordinates $i = i(t) \in S$, $j = j(t) \notin S$. The next step is to construct a pair of uniform random variables $\lambda_{x} = \lambda(t)^{x}$ and $\lambda_{y} = \lambda(t)^{y}$ with which to update the chains $X_{t}$ and $Y_{t}$ respectively. Assume first that $\frac{X_{t}[i] + X_{t}[j]}{Y_{t}[i] + Y_{t}[j]} < 1$, and choose $\lambda_{y}$ uniformly in $[0,1]$. Then set
\begin{align} \label{CgEqSubsetCoupling}
\lambda_{x} = \lambda_{y} \frac{Y_{t}[i] + Y_{t}[j]}{X_{t}[i] + X_{t}[j]} + \frac{1}{X_{t}[i] + X_{t}[j]} \sum_{s \in S / \{ i \}} (Y_{t}[s] - X_{t}[s])
\end{align} 
if that results in a value between 0 and 1. Otherwise, choose $\lambda_{x}$ independently of $\lambda_{y}$, according to the density:
\begin{align} \label{CgEqSubsetRemainder}
f(\lambda) = C \left( 1 - \frac{X_{t}[i] + X_{t}[j]}{Y_{t}[i] + Y_{t}[j]} \textbf{1}_{ g(\lambda) \in [0,1]}(\lambda) \right)
\end{align}
where $C$ is a normalizing constant, and 
\begin{equation*}
g(\lambda) = \lambda \frac{Y_{t}[i] + Y_{t}[j]}{X_{t}[i] + X_{t}[j]} + \frac{1}{X_{t}[i] + X_{t}[j]} \sum_{s \in S / \{ i \}} (Y_{t}[s] - X_{t}[s])
\end{equation*} \par 
From the assumption that $\frac{X_{t}[i] + X_{t}[j]}{Y_{t}[i] + Y_{t}[j]} < 1$, it is easy to see that $f$ really is a density on $[0,1]$. From its construction as a remainder density, it is easy to check that under this coupling, $\lambda_{x}$ is uniformly distributed on $[0,1]$. If $\frac{X_{t}[i] + X_{t}[j]}{Y_{t}[i] + Y_{t}[j]} > 1$, an analogous construction will work. More precisely, in this case choose $\lambda_{x}$ first, and then choose $\lambda_{y}$ to satisfy equation \ref{CgEqSubsetCoupling} if the result is in $[0,1]$, rather than choosing $\lambda_{y}$ first. If the result is not in $[0,1]$, then choose $\lambda_{y}$ according to its remainder measure, given by  equation \eqref{CgEqSubsetRemainder} with $X_{t}$ and $Y_{t}$ switched and $g$ replaced by $g^{-1}$. Note that if equation \ref{CgEqSubsetCoupling} is satisfied, then $w(S, X_{t+1}) = w(S, Y_{t+1})$. \par 

For a pair of points $(x,y)$ in the simplex, a pair of update entries $(i,j)$, and a subset $S \subset [n]$ of interest such that $i \in S$ and $j$ not in $S$, we define $p(x,y,i,j,S)$ to be the probability that the associated subset coupling succeeds. Then Lemma 4.4 from \cite{Smit11} gives a lower bound on this probability: \\

\begin{lemma} [Subset Coupling] \label{LemmaSubsetCoupCayley}

Assume $n \geq 6$, and fix $0 \leq b \leq f-1$. For a pair of vectors $(x,y)$ satisfying $\sup_{i} \vert x_{i} - y_{i} \vert \leq n^{-f}$ and $\inf_{i} x_{i}, \inf_{i} y_{i} \geq n^{-b}$, we have $p(x,y,i,j,S) \geq 1 - 3n^{b + 1 - f}$ uniformly in $S$ and possible $i,j$.
\end{lemma}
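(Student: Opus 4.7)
The plan is to unwind the definition of the subset coupling, showing that failure reduces to an affine map escaping $[0,1]$, and then to bound that escape probability using the two hypotheses on $x$ and $y$. By symmetry of the construction in equations \eqref{CgEqSubsetCoupling} and \eqref{CgEqSubsetRemainder}, I may assume $X_{t}[i] + X_{t}[j] \leq Y_{t}[i] + Y_{t}[j]$, so that $\lambda_{y}$ is drawn uniformly on $[0,1]$ first and the coupling succeeds precisely when $g(\lambda_{y}) \in [0,1]$, where $g$ is the affine function in the display following \eqref{CgEqSubsetRemainder}. Writing $\alpha = (Y_{t}[i]+Y_{t}[j])/(X_{t}[i]+X_{t}[j])$ and $\beta = (X_{t}[i]+X_{t}[j])^{-1} \sum_{s \in S \setminus \{i\}}(Y_{t}[s]-X_{t}[s])$, the failure probability is the Lebesgue measure of $\{\lambda \in [0,1] : \alpha \lambda + \beta \notin [0,1]\}$, which is at most $|\beta|/\alpha + (\alpha+\beta-1)^{+}/\alpha \leq 2|\beta|/\alpha + |\alpha-1|/\alpha$.

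The next step is to bound $\alpha - 1$ and $\beta$ using the two standing hypotheses. Since $|x_{k}-y_{k}| \leq n^{-f}$ for all $k$ and $X_{t}[i]+X_{t}[j] \geq 2n^{-b}$, I get $|\alpha - 1| \leq 2n^{-f}/(2n^{-b}) = n^{b-f}$. For $\beta$, the point is to use the simplex constraint $\sum_{s}(Y_{t}[s]-X_{t}[s]) = 0$ to get
\begin{equation*}
\sum_{s \in S \setminus \{i\}} (Y_{t}[s]-X_{t}[s]) \;=\; -\sum_{s \notin S}(Y_{t}[s]-X_{t}[s]) - (Y_{t}[i]-X_{t}[i]),
\end{equation*}
which together with the direct bound $|S \setminus \{i\}| \cdot n^{-f}$ gives a numerator of size at most $\min(|S|-1, n-|S|+1)\cdot n^{-f} \leq (n/2 + 1) n^{-f}$. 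Combined with the lower bound on the denominator, this yields $|\beta| \leq (n/2+1) n^{b-f}/2$.

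Finally I assemble the estimates. Since $n \geq 6$ and $b \leq f-1$, we have $|\alpha - 1| \leq n^{-1}$, so in particular $\alpha \geq 1$ and $1/\alpha \leq 1$. Therefore
\begin{equation*}
1 - p(x,y,i,j,S) \;\leq\; 2|\beta| + |\alpha-1| \;\leq\; (n/2+1) n^{b-f} + n^{b-f} \;=\; n^{b-f}\bigl(n/2 + 2\bigr) \;\leq\; 3 n^{b+1-f},
\end{equation*}
where the last inequality holds comfortably for $n \geq 6$. The bound is uniform in $S$ and in the choice of $i \in S$, $j \notin S$, as claimed.

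The only genuine subtlety is the second step: a naive bound $|\sum_{s \in S \setminus \{i\}}(Y_{t}[s]-X_{t}[s])| \leq (|S|-1)n^{-f}$ would be too weak when $|S|$ is close to $n$, so the simplex identity is essential to recover the symmetric bound $\min(|S|-1,n-|S|+1)$ and keep the failure probability linear rather than quadratic in $n$. Everything else is a routine comparison of affine intervals and invocation of the stated hypotheses.
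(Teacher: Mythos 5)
The paper does not actually prove this lemma: it quotes it verbatim as Lemma 4.4 of \cite{Smit11}, so there is no in-paper argument to compare against. Your proof is correct and is the natural computation. After the WLOG reduction to $\alpha = (Y_{t}[i]+Y_{t}[j])/(X_{t}[i]+X_{t}[j]) \geq 1$ (valid since the success event $w(X_{t+1},S)=w(Y_{t+1},S)$ is symmetric in $X$ and $Y$ and the two branches of the construction are exactly the $X\leftrightarrow Y$ swaps of one another), failure is indeed the event $\alpha\lambda_{y}+\beta \notin [0,1]$ with $\lambda_{y}$ uniform, and your bound of $|\beta|/\alpha + (\alpha+\beta-1)^{+}/\alpha \leq 2|\beta|/\alpha + |\alpha-1|/\alpha$ on its measure is right. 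The estimates $|\alpha-1|\leq n^{b-f}$ and $|\beta|\leq (n/2+1)n^{b-f}/2$ follow directly from the $\sup$ and $\inf$ hypotheses, and the assembly gives $(n/2+2)n^{b-f}\leq 3n^{1+b-f}$.

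The one thing to flag is your closing remark that the simplex identity $\sum_{s}(Y_{t}[s]-X_{t}[s])=0$ is \emph{essential} to keep the failure probability ``linear rather than quadratic in $n$.'' That is not the case. The naive bound $\bigl|\sum_{s\in S\setminus\{i\}}(Y_{t}[s]-X_{t}[s])\bigr|\leq (|S|-1)n^{-f}\leq (n-2)n^{-f}$ already gives $|\beta|\leq (n-2)n^{b-f}/2$, hence failure probability at most $(n-1)n^{b-f}\leq 3n^{1+b-f}$, matching the claimed exponent. The symmetrized bound $\min(|S|-1,n-|S|+1)$ only improves the leading constant by roughly a factor of two and never changes the power of $n$, so the alleged subtlety is illusory; the proof stands either way.
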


In general, it is possible to choose $x,y,i,j,S$ so that the probability of success is 0 under any coupling, and the lemma is quite restrictive. Having bounded the probability of failure when $X_{t}$, $Y_{t}$ are close, we must show that they remain close with high probability. Define for $v \in \mathbb{R}^{n}$ and $S \subset [n]$ the quantity $\vert \vert v \vert \vert_{S} = \sum_{s \in S} \vert v[s] \vert$. We will need Lemma 4.5 from \cite{Smit11}:

\begin{lemma}[Closeness] \label{LemmaSmallness}
Let $X_{t}$, $Y_{t}$ be either the of the chains described in this paper, coupled as described in Section \ref{SecPartProc}, and assume that $P_{T_{1}} = \{ [n] \}$, that all subset couplings up to time $s-1$ have succeeded, and that $\vert \vert X_{T_{1}} - Y_{T_{1}} \vert \vert_{1} < \epsilon$. Then $\vert \vert X_{s} - Y_{s} \vert \vert_{S} < \epsilon$ for every $S$ in $P_{t}$.
\end{lemma}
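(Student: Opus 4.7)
My plan is to prove this by induction on $s$ starting at $s = T_1$. The base case is immediate: $P_{T_1} = \{[n]\}$ has a single block, and $\|X_{T_1} - Y_{T_1}\|_{[n]} = \|X_{T_1} - Y_{T_1}\|_1 < \epsilon$ by hypothesis.

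For the inductive step, I would split on whether $s$ is a marked time. If $s$ is not marked, then $P_s = P_{s+1}$, and the two updated coordinates $i(s), j(s)$ must lie in a common block of $P_s$ (otherwise the edge $(i(s), j(s))$ would have caused a merge when building $P_s$). The proportional coupling uses one shared $\lambda$, giving the pointwise identity $|X_{s+1}[i(s)] - Y_{s+1}[i(s)]| + |X_{s+1}[j(s)] - Y_{s+1}[j(s)]| = |(X_s-Y_s)[i(s)] + (X_s-Y_s)[j(s)]|$, which is bounded by $|X_s[i(s)] - Y_s[i(s)]| + |X_s[j(s)] - Y_s[j(s)]|$. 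Hence $\|X_{s+1} - Y_{s+1}\|_S \le \|X_s - Y_s\|_S < \epsilon$ for every $S \in P_{s+1}$, and blocks of $P_{s+1}$ disjoint from $\{i(s), j(s)\}$ inherit the bound trivially.

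The core case is a marked time $s = t_j$, where a block $B = S(t_j, 1) \cup S(t_j, 2) \in P_s$ splits in $P_{s+1}$. Writing $a_k = X_s[k] - Y_s[k]$, the key extra ingredient is a weight-preservation invariant extracted from the proof of the Final Coupling lemma: the success of every prior subset coupling implies $w(X_s, B) = w(Y_s, B)$ for every $B \in P_s$, so $\sum_{k \in B} a_k = 0$. Setting $A = \sum_{k \in S(t_j,1)} a_k = -\sum_{k \in S(t_j,2)} a_k$, this yields $|A| \le \min(\alpha, \beta)$, where $\alpha = \|X_s - Y_s\|_{S(t_j,1)}$ and $\beta = \|X_s - Y_s\|_{S(t_j,2)}$. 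With $i = i(s) \in S(t_j,1)$ and $j = j(s) \in S(t_j,2)$, the success of the subset coupling for $S(t_j,1)$ forces $X_{s+1}[i] - Y_{s+1}[i] = a_i - A$ (the unique value consistent with $w(X_{s+1}, S(t_j,1)) = w(Y_{s+1}, S(t_j,1))$), and mass conservation at $\{i,j\}$ then yields $X_{s+1}[j] - Y_{s+1}[j] = a_j + A$. A triangle-inequality bound now gives $\|X_{s+1} - Y_{s+1}\|_{S(t_j,1)} \le |a_i - A| + \sum_{k \in S(t_j,1), k \ne i}|a_k| \le \alpha + |A| \le \alpha + \beta = \|X_s - Y_s\|_B < \epsilon$, and symmetrically on $S(t_j,2)$. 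Every other block of $P_{s+1}$ is untouched between times $s$ and $s+1$ and inherits the bound from the inductive hypothesis.

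The main obstacle is the marked-time case: a naive triangle-inequality bound after a split would allow $\|\cdot\|_S$ to grow by a factor of roughly $2$ per marked time, which would be useless. The trick is to exploit the weight-preservation invariant $\sum_{k \in B} a_k = 0$: the mass discrepancy $|A|$ on $S(t_j,1)$ is then controlled by the $L^1$ norm on $S(t_j,2)$, and vice versa, so the two post-split norms together remain bounded by the single pre-split norm rather than by twice it.
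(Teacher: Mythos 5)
The paper itself does not prove this lemma; it cites it as Lemma 4.5 of \cite{Smit11}, so there is no in-text proof to compare against. On its own merits, your argument is correct and identifies the right mechanism. The base case and the non-marked-time case are straightforward, using that $i(s)$ and $j(s)$ necessarily lie in the same block of $P_s$ (the edge $(i(s),j(s))$ is present in $G_s$) and that the proportional coupling gives
\[
\lvert X_{s+1}[i]-Y_{s+1}[i]\rvert+\lvert X_{s+1}[j]-Y_{s+1}[j]\rvert=\lvert a_i+a_j\rvert\le\lvert a_i\rvert+\lvert a_j\rvert
\]
so each block's $\ell^1$ discrepancy cannot increase. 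The marked-time case is where the real content is, and you correctly isolate the needed extra ingredient: the weight invariant $w(X_s,B)=w(Y_s,B)$ for every $B\in P_s$, which is exactly the inductive statement established inside the Final Coupling lemma and is available here because all prior subset couplings have succeeded. This gives $\sum_{k\in B}a_k=0$, so the net mass discrepancy $A$ of $S(t_j,1)$ is controlled both by $\alpha=\|X_s-Y_s\|_{S(t_j,1)}$ and by $\beta=\|X_s-Y_s\|_{S(t_j,2)}$, and the identity $X_{s+1}[i]-Y_{s+1}[i]=a_i-A$ forced by success of the subset coupling, together with mass conservation at $\{i,j\}$, yields $\|X_{s+1}-Y_{s+1}\|_{S(t_j,1)}\le\alpha+\lvert A\rvert\le\alpha+\beta=\|X_s-Y_s\|_B<\epsilon$ and symmetrically for $S(t_j,2)$. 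Your closing remark about why the naive bound fails and why the invariant saves the day captures the whole point. Two small presentational notes: the lemma statement's ``$P_t$'' is evidently a typo for ``$P_s$'', which your proof treats correctly; and for the narrow-matrix chain the same computations go through in the first-column coordinates $X_t[\cdot,1]$ (the second column is determined by the row-sum constraint, so contributes only an immaterial factor of two to the norms), though it would be worth one sentence to flag that the proportional coupling's displacement identity also holds when $\delta_t[i,j]$ and $\epsilon_t[i,j]$ have opposite signs, where a short check gives $\lvert\delta_t\rvert+\lvert\epsilon_t\rvert=\lvert a_i+a_j\rvert$.
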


Related to this, Lemma 4.6 from \cite{Smit11} shows that $X_{t}$, $Y_{t}$ rarely have entries close to 0: \\
\begin{lemma}[Largeness] \label{LemmaLargeness}
$P[\inf_{1 \leq i \leq n} \inf_{0 \leq t \leq T-1} Y_{t}[i] \leq T n^{-2.5 -k}] \leq 2 T n^{-k}$ for $n > \max(2k, 4096)$.  \end{lemma}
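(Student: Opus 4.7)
The plan is to use stationarity of $Y_t$. Since $Y_0$ is drawn from the stationary measure and the Gibbs sampler preserves it, $\mathcal{L}(Y_t)$ equals $U_G$ (respectively $U_n$) for every $t$, so every entry $Y_t[i]$ has the same one-coordinate marginal. This reduces the question to a tail bound on that marginal, combined with a union bound, and in particular sidesteps any need to analyze the chain dynamics.

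First I would identify the marginal. On the simplex $\Delta_G$ the uniform measure is $\mathrm{Dir}(1,\ldots,1)$ whose one-coordinate marginal is $\mathrm{Beta}(1,n-1)$, with density $(n-1)(1-x)^{n-2}$ on $[0,1]$. For any $t$ and $i$,
\begin{equation*}
P[Y_t[i] \le \epsilon] = 1 - (1-\epsilon)^{n-1} \le (n-1)\epsilon \le n\epsilon.
\end{equation*}
For the narrow-matrix sampler the stationary marginal of a single entry has the same Beta-type shape (now supported on $[0,2]$ because of the row-sum constraint), and an identical linear tail bound holds up to a harmless constant factor.

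Next I would apply a union bound, but with a refinement that is essential for the stated constants: at each Gibbs step only two entries (respectively four) are refreshed, so as $(t,i)$ ranges over $[0,T-1]\times[n]$ the random variable $Y_t[i]$ takes at most $n+2T$ distinct realized values, namely the $n$ initial entries $Y_0[i]$ together with at most two freshly generated values at each subsequent step. Each of these, by stationarity applied to a one-coordinate projection at the appropriate time, has the marginal computed above, so
\begin{equation*}
P\!\left[\inf_{i,t} Y_t[i] \le \epsilon\right] \le (n+2T)\, n\, \epsilon.
\end{equation*}
Substituting $\epsilon = T n^{-2.5-k}$ and using $n > 4096$ together with the operative regime ($T$ polynomial in $n$), this simplifies to at most $2 T n^{-k}$ after absorbing the $n \cdot n\epsilon$ contribution into the dominant $2T \cdot n\epsilon$ term.

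The one genuinely delicate point is justifying that each freshly updated entry has the claimed Beta marginal, since the new value $\lambda(Y_t[g]+Y_t[gr])$ at time $t+1$ is produced by a correlated two-coordinate update rather than being sampled independently. This is resolved by appealing to stationarity of the whole chain: the marginal of $Y_{t+1}[g]$ for any fixed index $g$ is automatically $\mathrm{Beta}(1,n-1)$, independent of the conditional rule used to construct it. Once this is accepted, the remainder is first-moment bookkeeping; the condition $n > 2k$ is used only to ensure the linear tail estimate $1-(1-\epsilon)^{n-1}\le(n-1)\epsilon$ is applied in a regime where it loses nothing of importance.
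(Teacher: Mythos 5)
The paper does not reprove this lemma; it cites it directly from Lemma~4.6 of \cite{Smit11}, so there is no in-paper argument to compare against. Your overall strategy — stationarity of $Y_t$, the one-coordinate $\mathrm{Beta}(1,n-1)$ marginal with tail $P[Y_t[i]\le\epsilon]\le n\epsilon$, and a union bound over the $n$ initial coordinates plus the two entries freshly resampled at each of the $T$ steps — is the natural approach, and your note that stationarity gives the correct marginal even conditionally on which indices were updated is the right way to justify the union bound. But the final arithmetic step does not close. Substituting $\epsilon = Tn^{-2.5-k}$ into $(n+2T)\,n\,\epsilon$ gives
\begin{equation*}
(n+2T)\,n\cdot T n^{-2.5-k} = (n+2T)\,T\,n^{-1.5-k},
\end{equation*}
which for $T\ge n$ is of order $T^{2} n^{-1.5-k}$; this is $\le 2T n^{-k}$ only when $n+2T \le 2n^{1.5}$, i.e.\ $T \lesssim n^{1.5}$. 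The phrase ``absorbing the $n\cdot n\epsilon$ contribution into the dominant $2T\cdot n\epsilon$ term'' silently drops a factor of $Tn^{-1.5}$: the dominant term $2T\cdot n\epsilon$ is $2T^{2}n^{-1.5-k}$, not $2Tn^{-k}$. Your caveat ``$T$ polynomial in $n$'' does not fix this, since the lemma is stated without any restriction on $T$, and the paper applies it in Section~\ref{SecCoupForCayley} with $T$ governed by $\widehat{\gamma}\ge \tfrac{1}{2n^{4}}$, i.e.\ $T$ as large as order $n^{4}\log n$, which is far beyond $n^{1.5}$. So as written your calculation proves a genuinely weaker statement (the lemma restricted to $T\le n^{1.5}$), and you would need either a different argument for the large-$T$ range or an explanation of why a $T$-linear bound holds despite the $T$-proportional threshold; the union bound alone does not deliver it.
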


This lets us complete the calculation. We split the run of $\frac{8 C \log(n)}{\widehat{\gamma}}$ steps into an initial contractive phase of length $T_{1} = 8(\frac{5}{6}C + \frac{7}{2}) \frac{\log(n)}{\widehat{\gamma}}$ and a second, coupling, phase of length $T_{2} = 8(\frac{1}{6}C - \frac{7}{2}) \frac{\log(n)}{\widehat{\gamma}}$. By Lemma 5, $X_{T} = Y_{T}$ unless one of the subset couplings fails or  $\tau > T_{2}$. By Lemma 7, $P[\tau > T_{2}] \leq 2n^{-\frac{1}{6} C + 3.5}$. Thus, it remains only to bound the probability that a subset coupling fails, assuming $\tau \leq T_{2}$. \par 

By Lemma 6, Markov's inequality and the bound $\vert \vert v \vert \vert_{1} \leq \sqrt{2n} \vert \vert v \vert \vert_{2}$, $P[\vert \vert X_{T_{1}} - Y_{T_{1}} \vert \vert_{1} \geq n^{ \frac{C}{3} + \frac{1}{2}}] \leq 4 \sqrt{2} n^{- \frac{C}{6} - 1}$. It is easy to show that $\widehat{\gamma} \geq \frac{1}{2n^{4}}$ for simple random walk on any Cayley graph (e.g. by naive bounds with Theorem 13.14 of \cite{LPW09}). Combining this with Lemma 10, we have $P[\inf_{g \in G, T_{1} \leq t \leq T_{2}} Y_{t}[g] \leq n^{-\frac{C}{6} - 3}] \leq 2 n^{4.5 - \frac{C}{6}}$ for $n$ satisfying $n > \max(4096, 2b, 2 \left( \frac{C}{6} - 3.5 \right) \log(n))$. By Lemmas 8 and 9, as long as $\inf_{g \in G, T_{1} \leq t \leq T_{2}} Y_{t}[g] > n^{-b}$ and $\vert \vert X_{T_{1}} - Y_{T_{1}} \vert \vert_{1} < n^{ \frac{C}{3} + \frac{1}{2}}$ and $C > \frac{103}{4}$, we conclude that the all subset couplings succeed with probability at least $1 - 3 n^{4.5 - \frac{C}{6}}$. Combining the bounds in this paragraph, as long as $\tau \leq T_{2}$, all subset couplings succeed with probability at least $1 - 6 n^{4.5 -\frac{C}{6}}$. \par 

Putting together the bounds in the last line of each of the preceeding two paragraphs, $P[Y_{T} \neq X_{T}] \leq 7 n^{4.5 - \frac{C}{6}}$, which proves the upper bound in the theorem.

\section{Lower Bounds for  Gibbs Samplers on the Simplex with Geometry}
In this section, we prove lower bounds on the mixing time of the Gibbs sampler on the simplex. The results are similar to those of \cite{RaWi05a}, though the method is different and elementary. Begin by calculating
\begin{align} \label{EqLowBdCalc}
E[X_{t+1}[g] \vert X_{t}] &= \left( 1 - \frac{2}{n} \right) X_{t}[g] + \frac{2}{n} \frac{1}{m} \sum_{r \in R} \frac{1}{2} (X_{t}[g] + X_{t}[gr]) \\
&= \left( 1 - \frac{1}{n} \right) X_{t}[g] + \frac{1}{n} \frac{1}{m} \sum_{r \in R} X_{t}[gr]
\end{align} \par 
In particular, let $K$ be the transition matrix on $G$ given by $K[g,g] = 1 - \frac{1}{n}$, and $K[g,gr] = \frac{1}{n m} $ for $r \in R$. This is the standard `edge'-based random walk on $G$ with generating set $R$ described above. By equation \eqref{EqLowBdCalc}, $E[X_{t}] = K^{t} X_{0}$. Note that this is not the same $K$ as was used earlier in the section while proving the upper bound on the mixing time. By the earlier assumptions on $R$, $K$ is reversible with respect to the uniform measure on $G$. Furthermore, it is orthogonally diagonalizable with real eigenvalues $1 = \beta_{1} > \beta_{2} \geq \ldots \geq \beta_{n} \geq 0$.\par 
Next, let $v$ be an eigenvector of $K$ with eigenvalue $\beta_{2}$, normalized so that $\vert \vert v \vert \vert_{2} = 1$ and $\vert \vert v - Kv \vert \vert_{2} = \gamma$, the spectral gap of $K$. Let $\Pi$ be the collection of vectors with nonnegative entries summing to 1, and let $w \in \Pi$ maximize the inner product $\langle v,w \rangle$ among such vectors; such a vector exists by the compactness of $\Pi$. Let $X_{t}$ be a copy of the Markov chain begun from $X_{0} \stackrel{D}{=} w$, then  $E[\langle X_{t},v \rangle] = (1 - \gamma)^{t} \langle w,v \rangle$. On the other hand, if $A_{t,d}$ is the event that $\langle X_{t},v \rangle > d$, 
\begin{align*}
E[\langle X_{t},v \rangle] &= E[ \langle X_{t},v \rangle 1_{A_{t,d}}] + E[\langle X_{t},v \rangle 1_{A_{t,d}^{c}}] \\
&\leq \langle X_{0}, v \rangle P[A_{t,d}] + d \\
\end{align*}
where the second inequality takes advantage of the maximality of $X_{0}$. Thus,
\begin{equation*}
P[A_{t,d}] \geq \frac{E[\langle X_{t},v \rangle] - d}{\langle X_{0}, v \rangle}
\end{equation*}
putting the two inequalities together,
\begin{equation} \label{IneqBadSetBasic}
P[A_{t,d}] \geq (1 - \gamma)^{t} - \frac{d}{\langle X_{0},v \rangle}
\end{equation} \par 
The next step is to prove that $\langle X_{0}, v \rangle \geq \frac{1}{2\sqrt{n}}$. Let $P \subset [n]$ be the collection of indices so that $v[p] \geq 0$ for $p \in P$. Without loss of generality, assume $\sum_{p \in P} v_{p}^{2} > \frac{1}{2}$. Now set $\lambda^{-1}  = \sum_{p \in P} v_{p} \leq \sqrt{n}$. Then consider the distribution given by $\mu_{p} = \lambda v_{p}$ for $p \in P$, and $\mu_{p} = 0$ for $p \notin P$: 
\begin{align*}
\langle \mu, v \rangle &= \lambda \sum_{p \in P} v_{p}^{2} \\
& \geq \frac{1}{2\sqrt{n}}
\end{align*}
and so by inequality \eqref{IneqBadSetBasic},
\begin{equation*}
P[A_{t,d}] \geq (1 - \gamma)^{t} - 2 d \sqrt{n} 
\end{equation*}

Now, let $Y \in \Delta_{G}$ be chosen according to the uniform distribution. Then $E[ \langle Y,v \rangle ] = 0$, and
\begin{align*}
E[\langle Y,v \rangle^{2}] &= E[(\sum_{i \in G} Y[i] v_{i})^{2}] \\
&= \sum_{i \in G} v_{i}^{2} E[Y[i]^{2}] +  \sum_{i \neq j \in G} v_{i} v_{j} E[Y[i] Y[j]] \\
&\leq \frac{2}{n^{2}} + 0 \\
\end{align*} \par 
So, by Chebyshev's inequality, $P[\langle Y,v \rangle > d] \leq \frac{2}{d^{2} n^{2}}$. Putting this together with the inequality above, letting $d = n^{-\frac{5}{6}}$ and defining $P[A_{\infty,d}] = \lim_{t \rightarrow \infty} P[A_{t,d}]$,
\begin{equation*}
P[A_{t,d}] - P[A_{\infty,d}] \geq \gamma^{t} - 4 n^{-\frac{1}{3}}
\end{equation*}
And the lower bound follows immediately. 

\section{Contraction and Narrow Matrices}
We begin with some quick observations about the geometry of our space. It is the part of an $(n-1)$-dimensional affine subspace of $\mathbb{R}^{2n}$ that lies in the upper orthant. Our updates are in fact moves along 1-dimensional pieces of this subspace, even though we are updating four entries. While the original motivation for this sampler comes from statistics (see e.g. \cite{DiEf85}), it is being treated here primarily as an example of a chain that is somewhere between the standard Gibbs sampler on the simplex and an analogous Gibbs sampler on doubly-stochastic matrices or Kac's famous walk on the orthogonal group. The former was analyzed by the author in \cite{Smit11}. Matching bounds on Total variation mixing time are not known for either the Gibbs sampler on doubly-stochastic matrices or Kac's walk. The best such bounds to date can be found in \cite{Smit12} and \cite{Jian12} respectively. Both mixing bounds are polynomials with small but probably incorrect degrees, and both are based on much more complicated non-Markovian coupling arguments. \par 

In this section, we will prove the following contractivity estimate for the Gibbs sampler on narrow matrices. The original proof was a direct translation of the path-coupling argument for $k$ by $n$ matrices found in \cite{Smit12}. The following greatly simplified proof was suggested by an extremely helpful reviewer.

\begin{lemma} [Weak Convergence on Narrow Matrices] \label{LemmaWeakConMat}
If $X_{t}$ and $Y_{t}$ are coupled under the proportional coupling for time $0 \leq t \leq T_{1} = 3 k n \log(n)$, then
\begin{equation*}
P[\vert \vert X_{T_{1}} - Y_{T_{1}} \vert \vert_{1} \geq \epsilon ] \leq  \epsilon^{-1} n^{-k}
\end{equation*}
\end{lemma}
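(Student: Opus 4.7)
The plan is to reduce the analysis to a single scalar difference per row, establish a per-step $L^2$ contraction under the proportional coupling, iterate for $T_1 = 3kn\log n$ steps, and then convert back to $L^1$ via Cauchy--Schwarz and Markov. The row-sum constraint $X_t[i,1] + X_t[i,2] = 2$ forces $X_t[i,2] - Y_t[i,2] = -(X_t[i,1] - Y_t[i,1])$, so writing $D_t[i] := X_t[i,1] - Y_t[i,1]$ gives $\|X_t - Y_t\|_1 = 2 \sum_i |D_t[i]|$; the column-sum constraint gives $\sum_i D_t[i] = 0$.

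Next I would analyze a single step where rows $i < j$ are selected. The pair sum $s_x := X_t[i,1] + X_t[j,1]$ is preserved, and $X_{t+1}[i,1]$ is uniform on an interval of length $r(s_x) := 2 - |s_x - 2|$ centered at $s_x/2$. Under the proportional coupling with shared parameter $\lambda \in [0,1]$, setting $\sigma := D_t[i] + D_t[j] = s_x - s_y$ and $\beta := (r(s_x) - r(s_y))/2$, a direct calculation gives
\[
D_{t+1}[i] = \tfrac{\sigma}{2} + (2\lambda - 1)\beta, \qquad D_{t+1}[j] = \tfrac{\sigma}{2} - (2\lambda - 1)\beta.
\]
The crucial observation is that $r(\cdot) = 2 - |\cdot - 2|$ is $1$-Lipschitz, so $|\beta| \leq |\sigma|/2$.

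Put $Q_t := \sum_i D_t[i]^2$. Using $E_\lambda[(2\lambda - 1)^2] = 1/3$ together with the Lipschitz bound,
\[
E_\lambda\bigl[D_{t+1}[i]^2 + D_{t+1}[j]^2\bigr] \;=\; \tfrac{1}{2}\sigma^2 + \tfrac{2}{3}\beta^2 \;\leq\; \tfrac{2}{3}(D_t[i] + D_t[j])^2.
\]
Averaging over the uniformly random pair $\{i, j\}$ and using the identity $\sum_{i<j} D_t[i] D_t[j] = -Q_t/2$ (which follows from $\sum_i D_t[i] = 0$), this produces a one-step contraction $E[Q_{t+1} \mid \mathcal{F}_t] \leq (1 - c/n)\,Q_t$ for an explicit constant $c > 0$. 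Iterating across $T_1 = 3kn\log n$ steps with the trivial initial bound $Q_0 \leq 4n$ gives $E[Q_{T_1}] = n^{1 - \Theta(k)}$.

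To finish, Cauchy--Schwarz $\sum_i |D_{T_1}[i]| \leq \sqrt{n\,Q_{T_1}}$ together with Jensen's inequality yields $E\|X_{T_1} - Y_{T_1}\|_1 \leq 2\sqrt{n\,E[Q_{T_1}]}$, which is polynomially small, and Markov's inequality delivers the claimed bound. The main obstacle I anticipate is tracking constants precisely enough: the $L^2 \to L^1$ conversion costs a factor of $\sqrt{n}$, so the contraction constant produced by the combination of $E_\lambda[(2\lambda-1)^2] = 1/3$, the Lipschitz constant $1$ of $r$, and the zero-sum constraint must be sharp enough to absorb both this factor and the $Q_0 \leq 4n$ initial bound while leaving $T_1 = 3kn\log n$ as the right threshold; this is the bookkeeping that makes the simplified proof nontrivial even though each individual ingredient is elementary.
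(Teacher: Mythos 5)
Your proposal is correct and matches the paper's approach: a per-step $L^{2}$ contraction with rate $1-\frac{2}{3n}$ under the proportional coupling (this is exactly Lemma~\ref{LemmaL2Cont}), iterated for $T_{1}=3kn\log n$ steps from a trivial initial bound, then converted to an $L^{1}$ bound via Cauchy--Schwarz and Markov's inequality. Your observation that $X_{t+1}[i,1]$ is uniform on an interval centered at $s_{x}/2$ with half-width $r(s_{x})/2$ and that $r(s)=2-|s-2|$ is $1$-Lipschitz is a clean unification of the paper's two-case computation (the cases $\delta_{t}[i,j]\epsilon_{t}[i,j]\geq 0$ and $<0$ in the proof of Lemma~\ref{LemmaL2Cont}), and the small polynomial slack you note in the final bookkeeping (getting $O(\epsilon^{-1}n^{1-k})$ rather than exactly $\epsilon^{-1}n^{-k}$) is present in the paper's own sketch as well and is harmlessly absorbed into the free parameter $k$.
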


We make some basic remarks about the chain, beginning with an alternative description of the transition probabilities. Define $\delta_{t}[i,j] = 2 - X_{t}[i,1] - X_{t}[j,1]$, and $\epsilon_{t}[i,j] = 2 - Y_{t}[i,1] - Y_{t}[j,1]$. Then a step of the chain can be defined in the following way. Choose $i,j$ as before, and choose $\lambda \stackrel{D}{=} U[0,1]$. If $\delta_{t}[i,j] \geq 0$, then we update according to:
\begin{align} \label{EqNarrowRepDeltaPos}
X_{t+1}[i,1] &= \lambda (2 - \delta_{t}[i,j]) \nonumber \\
X_{t+1}[j,1] &= (1 - \lambda) (2 - \delta_{t}[i,j]) \\
X_{t+1}[i,2] &= 2(1 - \lambda) + \lambda \delta_{t}[i,j] \nonumber \\
X_{t+1}[j,2] &= 2 \lambda + (1-\lambda) \delta_{t}[i,j] \nonumber 
\end{align}
If $\delta_{t}[i,j] < 0$, we update according to:
\begin{align} \label{EqNarrowRepDeltaNeg}
X_{t+1}[i,1] &= 2 \lambda - (1 - \lambda) \delta_{t}[i,j] \nonumber \\
X_{t+1}[j,1] &= 2(1 - \lambda) - \lambda \delta_{t}[i,j] \\
X_{t+1}[i,2] &= (1 - \lambda) (2 + \delta_{t}[i,j]) \nonumber \\
X_{t+1}[j,2] &= \lambda (2 + \delta_{t}[i,j]) \nonumber 
\end{align} \par 
Note that in both cases, a larger value of $\lambda$ means a larger value of $X_{t+1}[i,1]$. We are now ready to describe the proportional coupling: as in the simplex case, we choose the same value of $\lambda$ for both chains in the above representation. This leads to the following contraction estimate:

\begin{lemma}[$L^{2}$ Contractivity] \label{LemmaL2Cont}If $X_{t}$, $Y_{t}$ are coupled under the proportional coupling for time $0 \leq t \leq T_{1}$, then
\begin{equation*}
E[||X_{T_{1}} - Y_{T_{1}}||_{2}^{2}] \leq (1 - \frac{2}{3n})^{T_{1}} ||X_{0} - Y_{0}||_{2}^{2}
\end{equation*}
\end{lemma}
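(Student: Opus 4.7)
The plan is to follow a standard path--coupling style $L^2$ contraction calculation, but one has to do a small case analysis because the update formulas \eqref{EqNarrowRepDeltaPos} and \eqref{EqNarrowRepDeltaNeg} depend on the sign of $\delta$. Let $D_t = X_t - Y_t$ and $\Delta_t = \|D_t\|_2^2$. The row-sum constraint forces $D_t[i,2] = -D_t[i,1]$, so $\Delta_t = 2\sum_i D_t[i,1]^2$, and the column-sum constraint forces $\sum_i D_t[i,1] = 0$. I will work entirely with the first column.

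The first step is to understand a single update. Suppose the pair $(i,j)$ is chosen. Both equations \eqref{EqNarrowRepDeltaPos} and \eqref{EqNarrowRepDeltaNeg} preserve the pairwise column sum $X_{t+1}[i,1] + X_{t+1}[j,1] = X_t[i,1] + X_t[j,1]$, so under the proportional coupling the quantity $S := D_t[i,1] + D_t[j,1]$ is unchanged by the update. Consequently
\begin{equation*}
D_{t+1}[i,1]^2 + D_{t+1}[j,1]^2 = \tfrac{1}{2}\bigl(S^2 + M_{t+1}^2\bigr), \qquad M_{t+1} := D_{t+1}[i,1] - D_{t+1}[j,1],
\end{equation*}
and the analogous identity holds at time $t$ with $M_t = D_t[i,1] - D_t[j,1]$. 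Thus bounding the change of $\Delta$ reduces to bounding $E_\lambda[M_{t+1}^2]$.

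The key (and slightly tedious) step is to compute $M_{t+1}$ as a function of $\lambda$ in the four sign-combinations of $\delta^X_t := 2 - X_t[i,1] - X_t[j,1]$ and $\delta^Y_t := 2 - Y_t[i,1] - Y_t[j,1]$. Writing $s_X = X_t[i,1]+X_t[j,1]$ and $s_Y = Y_t[i,1]+Y_t[j,1]$, a direct substitution into \eqref{EqNarrowRepDeltaPos}/\eqref{EqNarrowRepDeltaNeg} gives: when $\delta^X_t$ and $\delta^Y_t$ have the same sign, $M_{t+1} = \pm(s_X - s_Y)(2\lambda - 1)$, and when their signs differ, $M_{t+1} = \pm(s_X + s_Y - 4)(2\lambda-1)$. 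Integrating $(2\lambda-1)^2$ over $[0,1]$ gives $\tfrac{1}{3}$, so $E_\lambda[M_{t+1}^2]$ equals $S^2/3$ in the matched-sign cases and $(s_X+s_Y-4)^2/3$ in the mismatched case. The algebraic identity $(s_X - s_Y)^2 - (s_X+s_Y-4)^2 = -4(s_X - 2)(s_Y-2)$ is non-negative precisely when $s_X - 2$ and $s_Y - 2$ have opposite signs, which is the regime of the mismatched case. Hence in every case
\begin{equation*}
E_\lambda[M_{t+1}^2 \mid X_t, Y_t, (i,j)] \leq S^2/3.
\end{equation*}
I expect this sign case-analysis to be the only part requiring care; once it is in hand, the rest is routine.

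The final step is to average over $(i,j)$ uniform on $\{(i,j): 1\leq i<j\leq n\}$. Using $\sum_i D_t[i,1] = 0$ one checks
\begin{equation*}
\sum_{i<j}(D_t[i,1]+D_t[j,1])^2 = (n-2)\sum_i D_t[i,1]^2, \qquad \sum_{i<j}(D_t[i,1]-D_t[j,1])^2 = n\sum_i D_t[i,1]^2.
\end{equation*}
Combining these with the step bound gives
\begin{equation*}
E[\Delta_{t+1} - \Delta_t \mid X_t, Y_t] \leq \binom{n}{2}^{-1}\Bigl(\tfrac{n-2}{3} - n\Bigr) \sum_i D_t[i,1]^2 = -\frac{2(n+1)}{3n(n-1)}\,\Delta_t,
\end{equation*}
which is stronger than the claimed $-\tfrac{2}{3n}\Delta_t$. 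Iterating yields the lemma. The main obstacle, as noted, is verifying the case-analysis for $E_\lambda[M_{t+1}^2]$ when $\delta^X_t$ and $\delta^Y_t$ disagree in sign; everything else is linear algebra.
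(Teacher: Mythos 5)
Your proof is correct and follows essentially the same route as the paper's: a per-step $L^2$ calculation under the proportional coupling, with a sign case-analysis on $\delta^X_t$ and $\delta^Y_t$ and the observation that the mismatched-sign case is dominated by the matched-sign case, followed by the global averaging identity over pairs $(i,j)$ using $\sum_i D_t[i,1]=0$. The only presentational difference is that you decompose the pair contribution via $a^2+b^2=\tfrac{1}{2}((a+b)^2+(a-b)^2)$ into an invariant ``sum'' part $S$ and a variable ``difference'' part $M$, whereas the paper computes $E_\lambda[\text{per-step squared change}]=\tfrac{4}{3}(\delta-\epsilon)^2$ directly; these are algebraically equivalent (your $S=\epsilon-\delta$), and both yield the same contraction rate $\tfrac{2(n+1)}{3n(n-1)}$, which is then weakened to the stated $\tfrac{2}{3n}$.
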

\begin{proof} 
We begin the proof by calculating the change in the $L^{2}$ norm during a single move. Let $F_{t}[i,j]$ be the event that coordinates $i,j$ are updated at time $t$. If $\delta_{t}[i,j] \epsilon_{t}[i,j] \geq 0$, we find:
\begin{align*}
\Delta_{t} &\equiv E[(X_{t+1}[i,1] - Y_{t+1}[i,1])^{2} + (X_{t+1}[j,1] - Y_{t+1}[j,1])^{2} \\
&+ (X_{t+1}[i,2] - Y_{t+1}[i,2])^{2} + (X_{t+1}[j,2] - Y_{t+1}[j,2])^{2} \vert F_{t}[i,j]] \\
&=2 E[ (\lambda(2 - \delta_{t}[i,j]) - \lambda(2 - \epsilon_{t}[i,j]))^{2} + (\lambda \delta_{t}[i,j] - \lambda \epsilon_{t}[i,j])^{2} \vert F_{t}[i,j]] \\
&= \frac{4}{3} (\delta_{t}[i,j] - \epsilon_{t}[i,j])^{2}
\end{align*} \par 

If $\delta_{t}[i,j] \epsilon_{t}[i,j] < 0$, we find:

\begin{align*}
\Delta_{t} &= E[(X_{t+1}[i,1] - Y_{t+1}[i,1])^{2} + (X_{t+1}[j,1] - Y_{t+1}[j,1])^{2} \\
&+ (X_{t+1}[i,2] - Y_{t+1}[i,2])^{2} + (X_{t+1}[j,2] - Y_{t+1}[j,2])^{2} \vert F_{t}[i,j]] \\
&= 2 E[ (\lambda(2 - \delta_{t}[i,j]) - 2 \lambda + (1 - \lambda)\epsilon_{t}[i,j])^{2} \vert F_{t}[i,j] ] \\
&+ E[((1 - \lambda)(2 - \delta_{t}) - 2 (1 - \lambda) + \lambda \epsilon_{t}[i,j])^{2} \vert F_{t}[i,j]] \\
&= \frac{4}{3}(\delta_{t}[i,j] - \epsilon_{t}[i,j])^{2} + \frac{1}{3} \delta_{t}[i,j] \epsilon_{t}[i,j] \\
&< \frac{4}{3}(\delta_{t}[i,j] - \epsilon_{t}[i,j])^{2}
\end{align*}

As in the simplex case, we can calculate the sums of terms like $(\epsilon_{t}[i,j] - \delta_{t}[i,j])^{2}$ in terms of sums of terms like $(X_{t}[i,j] - Y_{t}[i,j])^{2}$. We first note that
\begin{align*}
0 &= (\sum_{i=1}^{n} X_{t}[i,1] - Y_{t}[i,1])^{2} \\
&= \sum_{i=1}^{n}  (X_{t}[i,1] - Y_{t}[i,1])^{2} + 2 \sum_{i < j} (X_{t}[i,1] - Y_{t}[i,1])(X_{t}[j,1] - Y_{t}[j,1])
\end{align*} \par 
If $\delta_{t}[i,j] \epsilon_{t}[i,j] \geq 0$ for all $i,j$, we can write the first line of the following computation:
\begin{align}  \label{EqDeltaMinusEpsilon}
\sum_{i \neq j} (\delta_{t}[i,j] - \epsilon_{t}[i,j])^{2} &= \sum_{i \neq j}(X_{t}[i,1] + X_{t}[j,1] - Y_{t}[i,1] - Y_{t}[j,1])^{2} \nonumber \\
&= \sum_{i \neq j} [(X_{t}[i,1] - Y_{t}[i,1])^{2} + (X_{t}[j,1] -Y_{t}[j,1])^{2} \nonumber \\
&+ 2 (X_{t}[i,1] -Y_{t}[i,1])(X_{t}[j,1] -Y_{t}[j,1])] \nonumber \\
&= (n-2) \sum_{j=1}^{2} \sum_{i=1}^{n} (X_{t}[i,j] - Y_{t}[i,j])^{2}
\end{align} 
Then the final contraction is given by:
\begin{align*}
\Delta_{t} &= E[||X_{t+1} - Y_{t+1} ||_{2}^{2} | X_{t}, Y_{t}] \\
&= \frac{1}{n(n-1)} \sum_{k=1}^{2} \sum_{m=1}^{n} \sum_{i \neq j} E[(X_{t+1}[m,k] - Y_{t+1}[m,k])^{2} | F_{t}[i,j]] \\
&= \frac{1}{n(n-1)} \sum_{k=1}^{2} \sum_{m=1}^{n} ( \sum_{i,j \neq m} (X_{t}[m,k] - Y_{t}[m,k])^{2} \\
&+ 2 \sum_{j \neq m} E[(X_{t+1}[m,k] - Y_{t+1}[m,k])^{2} | F_{t}[i,j]]) \\
&= (1 - \frac{2}{n}) ||X_{t} - Y_{t} ||_{2}^{2} + \frac{1}{n(n-1)} \sum_{i \neq j} \frac{4}{3} (\delta_{t}[i,j] - \epsilon_{t}[i,j])^{2} \\
&\leq (1 - \frac{2}{3n}) ||X_{t} - Y_{t}||_{2}^{2}
\end{align*}
where the last line is due to equation \eqref{EqDeltaMinusEpsilon}. 
\end{proof} \par 

Lemma \ref{LemmaWeakConMat} follows from this bound, the inequality $\vert X_{t}[i] - Y_{t}[i] \vert \leq  \vert \vert X_{t} - Y_{t} \vert \vert_{2}$, and Markov's inequality.

\section{Coupling for Narrow Matrices}
In this section, we show that subset couplings are likely to succeed, and finish the proof of Theorem 2. The main lemma is:

\begin{lemma}[Coupling for Nearby Points] \label{LemmaNearbyCoupling}
Fix $a > 7.5$ and $n > \max(4096, a+3.5)$ satisfying $\frac{n}{\log(n)} > \frac{3(1+2c)(\frac{1}{2} + 2c)}{c}$. Let $Y_{t}$ be a copy of the chain started at stationarity and assume that $X_{t}$ is a copy of the chain which satisfies $\vert \vert X_{T_{1}} - Y_{T_{1}} \vert \vert_{1} \leq n^{-a}$. Then for $T_{2} > (\frac{1}{2} + 2c) n \log(n)$ and $T = T_{1} + T_{2}$ we have $P[X_{T} \neq Y_{T}] \leq 10 n^{3.75 - \frac{a}{2}} + n^{-c}$  \end{lemma}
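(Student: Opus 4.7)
The proof fits directly into the template of Section~\ref{SecPartProc}. The plan is to run the non-Markovian coupling on the interval $[T_1, T]$: apply a proportional coupling at every non-marked time and, at each marked time $t_j$, apply the subset coupling with respect to $S(t_j,1)$, constructed as in \eqref{CgEqSubsetCoupling}--\eqref{CgEqSubsetRemainder} but relative to the matrix-walk representation \eqref{EqNarrowRepDeltaPos}--\eqref{EqNarrowRepDeltaNeg}. By the Final Coupling Lemma, $X_T = Y_T$ unless (a) the partition process fails to collapse to a single block by time $T_1$ (that is, $\tau > T_2$), or (b) some subset coupling along the way fails, so it is enough to bound these two bad events separately.

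Event (a) is handled by Lemma~\ref{LemmaConnectednessNarrowMat} applied with $\epsilon = c$: the hypotheses $T_2 > (\tfrac{1}{2} + 2c)n\log(n)$ and $n/\log(n) > 3(1+2c)(\tfrac{1}{2}+2c)/c$ immediately give $P[\tau > T_2] \leq 2 n^{-c}$, which is the source of the $n^{-c}$ term in the conclusion.

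Event (b) is the main work. Condition on $\tau \leq T_2$, so that there are at most $n-1$ subset couplings to analyze, and estimate each per-coupling failure probability via Lemma~\ref{LemmaSubsetCoupCayley}, which requires a uniform closeness bound $\sup_{i,j}|X_t[i,j]-Y_t[i,j]| \leq n^{-f}$ and a uniform largeness bound $Y_t[i,j] \geq n^{-b}$. The closeness bound is inherited inductively from the hypothesis $\|X_{T_1} - Y_{T_1}\|_1 \leq n^{-a}$: by Lemma~\ref{LemmaSmallness}, as long as all earlier subset couplings succeed, one has $\|X_s - Y_s\|_S < n^{-a}$ for every $S \in P_s$ and every $s \in [T_1, T]$, so $f = a$ is admissible. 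The largeness bound comes from Lemma~\ref{LemmaLargeness}: choosing the parameter $k^\ast$ close to $a/2 - 3/4 + \log_n T$ and using $T = O(n \log n)$, one has $\inf_{i,j,t} Y_t[i,j] \geq n^{-(1.75 + a/2)}$ except on an event of probability $2Tn^{-k^\ast} = O(n^{0.75 - a/2})$, so $b = 1.75 + a/2$ is admissible. Each subset coupling then fails with probability at most $3n^{b+1-f} = 3 n^{2.75 - a/2}$, and union-bounding over at most $n-1$ marked times gives $3 n^{3.75 - a/2}$.

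Summing the three contributions---$2 n^{-c}$ from connectedness, $O(n^{0.75 - a/2})$ from largeness, and $3 n^{3.75 - a/2}$ from the subset couplings---and using the hypotheses $a > 7.5$ and $n > \max(4096, a + 3.5)$ to absorb the $\log n$ factors hidden in $T$ into the leading constant, one obtains the stated bound $10 n^{3.75 - a/2} + n^{-c}$. The main obstacle is the balancing act in the previous paragraph: $k^\ast$ must be chosen so that the largeness failure and the union-bounded subset-coupling failure both land at or below the target exponent $3.75 - a/2$, while simultaneously satisfying the side conditions $0 \leq b \leq f-1$ of Lemma~\ref{LemmaSubsetCoupCayley} (which demands roughly $a \geq 5.5$, easily covered by $a > 7.5$) and $n > \max(2k^\ast, 4096)$ of Lemma~\ref{LemmaLargeness} (matching the stated $n > \max(4096, a + 3.5)$). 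Everything else is routine verification that the inductive closeness hypothesis of Lemma~\ref{LemmaSmallness} is preserved at each marked time, exactly as in the Cayley-graph argument of Section~\ref{SecCoupForCayley}.
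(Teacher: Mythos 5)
Your overall structure matches the paper exactly: you decompose the bad event into a connectedness failure, a ``closeness to the boundary'' failure, and a subset-coupling failure, handle the first via Lemma~\ref{LemmaConnectednessNarrowMat} with $\epsilon = c$ (giving the $n^{-c}$ term), and handle the third by combining a closeness estimate (Lemma~\ref{LemmaSmallness}), a largeness estimate, and a per-coupling success probability, union-bounded over the at most $n-1$ marked times. That is precisely the paper's argument.

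The genuine gap is in which auxiliary lemmas you invoke. You cite Lemma~\ref{LemmaSubsetCoupCayley} and Lemma~\ref{LemmaLargeness}, but both of those are stated and proved for the simplex/Cayley-graph sampler, not for the narrow-matrix sampler, and they do not transfer unchanged. The paper has to prove separate versions for narrow matrices, namely Lemma~\ref{LemmaLargenessMatrix} and Lemma~\ref{LemmaNarrowSubsetCoup}, and the differences are not cosmetic. First, the state space $M_n$ is bounded both below \emph{and} above (entries lie in $[0,2]$), so the largeness lemma must simultaneously control $Y_t[i,j]$ away from $0$ and $2-Y_t[i,j]$ away from $0$; Lemma~\ref{LemmaLargeness} only addresses the lower bound. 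Second, the subset coupling for narrow matrices must handle the case where $\delta_t[i,j]$ and $\epsilon_t[i,j]$ have opposite signs, which changes the overlap estimate and yields the weaker bound $4n^{b+2-a}$ of Lemma~\ref{LemmaNarrowSubsetCoup} rather than the $3n^{b+1-f}$ you use from Lemma~\ref{LemmaSubsetCoupCayley}. In fact the hypothesis $a > 7.5$ in the statement is exactly the constraint $a-2 > b > 0$ from Lemma~\ref{LemmaNarrowSubsetCoup} with $b = a/2 + 1.75$, not the weaker $a \geq 5.5$ that you derive from Lemma~\ref{LemmaSubsetCoupCayley}; that discrepancy is a signal that the wrong lemma has been applied. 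Replacing Lemmas~\ref{LemmaSubsetCoupCayley} and~\ref{LemmaLargeness} with Lemmas~\ref{LemmaNarrowSubsetCoup} and~\ref{LemmaLargenessMatrix}, adding the symmetric $\sup$ control, and rerunning your bookkeeping brings your argument into line with the paper's.
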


Construct a partition process from time $T_{1}$ to time $T = T_{1} + (\frac{1}{2} + 2c) n \log(n)$. Our first step is to define subset couplings and show that if $X_{t}$ and $Y_{t}$ are very close to each other and not too close to certain hyperplanes, then any subset couplings are likely to succeed. To define a subset coupling of $X_{t}$ and $Y_{t}$, fix the subset $S$ of interest and common update variables $i = i(t) \in S$ and $j = j(t) \notin S$. If $\delta_{t}[i,j] \epsilon_{t}[i,j] \geq 0$, then the coupling of $\lambda_{t}^{x}$ and $\lambda_{t}^{y}$ is exactly as described for the other walk immediately before Lemma \ref{LemmaSubsetCoupCayley}. Otherwise, assume $\delta_{t}[i,j] < 0$ and $\epsilon_{t}[i,j] > 0$, and choose $\lambda_{t}^{x}$ from $[0,1]$ uniformly at random. Then set $\lambda_{t}^{y}$ to be the number which satisfies $w(X_{t+1}, S) = w(Y_{t+1},S)$ if such a number exists and is in the interval $[0,1]$. Just as with equation \eqref{CgEqSubsetCoupling}, the  measure (with mass less than 1) on $\lambda_{t}^{y}$ that this assignment defines minorizes the uniform distribution, and so leaves a remainder distribution analogous to that given in equation \eqref{CgEqSubsetRemainder}. If there is no value of $\lambda_{t}^{y}$ in $[0,1]$ which would allow $w(X_{t+1}, S) = w(Y_{t+1},S)$, then choose $\lambda_{t}^{y}$ uniformly from this remainder distribution.  If $\delta_{y}[i,j] > 0$ and $\epsilon_{t}[i,j] < 0$, the same construction works, but with $\lambda_{t}^{y}$ chosen first, and $\lambda_{t}^{x}$ chosen to satisfy $w(X_{t+1}, S) = w(Y_{t+1},S)$. \par 

The following lemma shows that, after a moderate number of steps, $X_{t}$ and $Y_{t}$ are unlikely to be too close to the boundary of our convex set:

\begin{lemma}[Largeness] \label{LemmaLargenessMatrix} For  $n > \max(2k, 4096)$, 
\begin{equation*}
P[\inf_{i,j} \inf_{T_{1} \leq t \leq T_{1} + n^{2} - 1} Y_{t}[i,j] \leq n^{-5.5 - k}], P[\sup_{i,j} \sup_{T_{1} \leq t \leq T_{1} + n^{2} - 1}Y_{t}[i,j] \geq 2 - n^{-5.5-k}] \leq 2n^{-k}
\end{equation*}
\end{lemma}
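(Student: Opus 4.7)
The plan is to leverage stationarity of $Y_t$, reduce to a uniform bound on the one-dimensional marginal density of a single entry, and then union bound over the $2n$ entries and $n^2$ time steps. Since $Y_t$ is stationary, each $Y_t[i,j]$ has the same marginal distribution as $Y_{T_1}[i,j]$, so a density-based single-time bound plus union bound suffices. Furthermore, the row constraint $Y_t[i,1]+Y_t[i,2]=2$ gives $\{Y_t[i,2]\le \epsilon\}=\{Y_t[i,1]\ge 2-\epsilon\}$, so a bound on the infimum over both columns automatically controls the supremum over both columns as well; I only need to estimate $P[Y_{T_1}[i,1]\le \epsilon]$ and $P[Y_{T_1}[i,1]\ge 2-\epsilon]$.

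The matrix is determined by its first column, which under $U_n$ is uniform on $\{x\in[0,2]^n:\sum_i x_i=n\}$. Equivalently, if $U_1,\ldots,U_n$ are i.i.d.\ Uniform$[0,2]$, then $Y_{T_1}[1,1]$ has the conditional law of $U_1$ given $\sum U_i=n$, with density
\begin{equation*}
f(u)=\frac{f_{S_{n-1}}(n-u)}{2\,f_{S_n}(n)}\qquad\text{for }u\in[0,2],
\end{equation*}
where $S_k=U_1+\cdots+U_k$. The key technical step is to show that $f(u)\le C$ for some constant $C$ that does not depend on $n$. This follows from two standard facts about convolutions of uniform densities: (i) $f_{S_k}$ is log-concave (by Prekopa--Leindler) and symmetric around $k$, hence maximized there, so $f_{S_{n-1}}(n-u)\le f_{S_{n-1}}(n-1)$ for $u\in[0,2]$; (ii) a log-concave density with variance $\sigma^2$ has modal value in $[c_1/\sigma,c_2/\sigma]$ for absolute constants $c_1,c_2$, and both $S_n$ and $S_{n-1}$ have variance $\Theta(n)$. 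Hence $f_{S_{n-1}}(n-1)$ and $f_{S_n}(n)$ are both $\Theta(1/\sqrt{n})$, and the ratio is bounded uniformly in $n$.

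Given $f\le C$, I get $P[Y_{T_1}[i,1]\le\epsilon]\le C\epsilon$, and by the symmetry $f(u)=f(2-u)$, also $P[Y_{T_1}[i,1]\ge 2-\epsilon]\le C\epsilon$. A union bound over the $2n$ entries and $n^2$ time indices yields
\begin{equation*}
P\Bigl[\inf_{i,j}\inf_{T_1\le t\le T_1+n^2-1}Y_t[i,j]\le n^{-5.5-k}\Bigr]\le 2C n^{3}\cdot n^{-5.5-k}=2C n^{-2.5-k},
\end{equation*}
which is $\le 2n^{-k}$ once $n>4096$ (absorbing $C$). The corresponding supremum bound is the same event by the row-sum identity.

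The main obstacle is the uniform bound on the modal value of a log-concave density in terms of its variance. If one prefers to avoid invoking this result as a black box, an explicit local central limit theorem for sums of i.i.d.\ Uniform$[0,2]$ random variables, with a quantitative error term (for instance via the Berry--Esseen-type argument or a direct computation using the Irwin--Hall distribution), gives $f_{S_k}(s)=\sqrt{3/(2\pi k)}\,(1+o(1))$ uniformly in $s$ within $O(1)$ of the mean $k$, and this can be plugged in to get the same conclusion. Everything else is a routine union bound.
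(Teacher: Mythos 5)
Your argument is correct, and it takes a genuinely different route from the paper. The paper's proof is a monotone-coupling argument: it constructs a Gibbs sampler $S_t$ on the simplex $\Delta_n$, couples it to $X_t$ via the shared update variables so that $X_t[i,1] \geq S_t[i]$ is preserved for all $t$, and then lets $t \to \infty$ to invoke the already-proved Largeness lemma for the simplex sampler (Lemma 4.6 of \cite{Smit11}); the second tail $2 - X_t[i,1]$ is handled by the same construction applied to the second column. You instead work directly with the stationary law $U_n$: you observe the first column is uniform on $\{x \in [0,2]^n : \sum x_i = n\}$, compute the exact marginal density $f(u) = f_{S_{n-1}}(n-u)/(2 f_{S_n}(n))$ (a correct convolution identity), and bound it uniformly in $n$ via log-concavity of the Irwin--Hall density together with the standard fact that a log-concave density with variance $\sigma^2$ has mode of order $1/\sigma$; the row-sum identity $Y_t[i,2] = 2 - Y_t[i,1]$ then makes the inf and sup events literally identical, and a union bound over $2n$ entries and $n^2$ times finishes. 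Your approach is more self-contained --- it does not rely on the earlier simplex analysis and yields an explicit constant --- at the price of pulling in the log-concave mode bound (or a local CLT) as a black box; the paper's approach is shorter precisely because the simplex result is already in hand, and it avoids any appeal to density estimates by working entirely through pathwise monotonicity. Both are valid, and the slack is large enough ($n^{-2.5-k}$ versus the target $n^{-k}$, with $n > 4096$) that either argument comfortably absorbs the constants.
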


\begin{proof}
Our proof will be via comparison to a Gibbs sampler on the simplex, studied by the author in \cite{Smit11}. Let $X_{t}$ be a copy of our Gibbs sampler on $2$ by $n$ matrices, and let $S_{t}$ be a Gibbs sampler on the simplex $\Delta_{n} = \{ S \in \mathbb{R}^{n} | S[i] \geq 0, \sum_{i=1}^{n} S[i] = 1 \}$. To make a move in this Gibbs sampler, choose distinct coordinates $1 \leq i,j \leq n$ and $0 \leq \lambda \leq 1$ uniformly at random, and update entry $S_{t}[i]$ to $\lambda (S_{t}[i] + S_{t}[j])$ and entry $S_{t}[j]$ to $(1 - \lambda)(S_{t}[i] + S_{t}[j])$, keeping all other entries fixed. This is identical to the other sampler in this note, with generating set $R = G \backslash \{ id \}$. \par 
Since $\sum_{i} S_{0}[i] = 1$, for any given $X_{0}$ it is possible to choose a corresponding $S_{0}$ such that $X_{0}[i,1] \geq S_{0} [i]$ for all $i$, without the row sum condition interfering. Next, under our descriptions there is a natural proportional coupling of $X_{t}$ and $S_{t}$, given by always choosing $i,j$ and $\lambda$ to be the same. We claim that under this coupling, $X_{t}[i,1] \geq S_{t}[i]$ for all $t > 0$ and all $1 \leq i \leq n$. Assume inductively that this holds until time $t$, and that coordinates $i,j$ are updated at time $t$. Using the representation in \eqref{EqNarrowRepDeltaPos} and \eqref{EqNarrowRepDeltaNeg}
\begin{align*}
X_{t+1}[i,1] & \geq \lambda \min(X_{t}[i,1] + X_{t}[j,1], 2) \\
& \geq \lambda \min(S_{t}[i] + S_{t}[j], 2) \\
& = \lambda (S_{t}[i] + S_{t}[j]) \\
&= S_{t+1}[i]
\end{align*} \par 
Letting $t$ go to infinity, we can consider $S_{\infty}$ and $X_{\infty}$ drawn from the stationary distributions of their respective Markov chains. The bound then follows from Lemma 4.6 of \cite{Smit11}.

\end{proof} \par 

Next, let $p(X,Y,i,j,S)$ be the probability that a subset coupling of $X$, $Y$ associated with subset $S$ works given that coordinates $i,j$ are updated. The proof of the following lemma is nearly identical to the proof of Lemma 4.4 in \cite{Smit11}:

\begin{lemma} [Subset Coupling] \label{LemmaNarrowSubsetCoup}
Assume $n \geq 6$ and fix $a-2 > b > 0$. For a pair of matrices $(x,y)$ satisfying $\sup_{m,k} \vert x[m,k] - y[m,k] \vert \leq n^{-a}$ and $\inf_{m,k} (x[m,k], y[m,k], 2 - x[m,k], 2 - y[m,k]) \geq n^{-b}$, we have for all sufficiently large $n$ that $p(x,y,i,j,S) \geq 1 - 4 n^{b + 2 - a}$ uniformly in $S \subset [n]$ and pairs $i \in S$, $j \notin S$.
\end{lemma}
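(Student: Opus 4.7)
The plan is to bound the failure probability of the subset coupling by splitting into cases based on the signs of $\delta_t[i,j]$ and $\epsilon_t[i,j]$, reducing the success condition $w(X_{t+1},S) = w(Y_{t+1},S)$ to an affine constraint between $\lambda_t^x$ and $\lambda_t^y$, and computing the Lebesgue measure of driving values for which this constraint has no solution in $[0,1]$. The closeness hypothesis $\sup_{m,k}|x[m,k]-y[m,k]| \leq n^{-a}$ yields $|\delta_t[i,j] - \epsilon_t[i,j]| \leq 2n^{-a}$, so whenever these two quantities have opposite signs, both of them have absolute value at most $2n^{-a}$.

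The main calculation is that since only the $i$-th row contributes an updated entry to $w(\cdot, S)$, the condition $w(X_{t+1},S) = w(Y_{t+1},S)$ reduces to $X_{t+1}[i,1] - Y_{t+1}[i,1] = D$, where $D = \sum_{s \in S \setminus \{i\}}(Y_t[s,1] - X_t[s,1])$ satisfies $|D| \leq n^{1-a}$. Using the explicit linear-in-$\lambda$ formulas \eqref{EqNarrowRepDeltaPos} and \eqref{EqNarrowRepDeltaNeg}, this becomes an equation of the form $\lambda_t^x = \alpha \lambda_t^y + \beta$ whose failure measure in $\lambda$-space is controlled by $|\beta|$ and $|\alpha - 1|$. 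In the same-sign case, the coefficient of $\lambda$ in the update is either $2 - \delta_t[i,j]$ or $2 + \delta_t[i,j]$; in both sub-cases, the hypothesis $\inf_{m,k}(x[m,k], 2 - x[m,k]) \geq n^{-b}$ gives a lower bound of $2n^{-b}$ on this coefficient, so $|\beta|$ is of order $n^{b+1-a}$ and $|\alpha - 1|$ is of order $n^{b-a}$, yielding a failure measure of order $n^{b+1-a}$. In the opposite-sign case the denominators are within $2n^{-a}$ of $2$, so $|\alpha - 1|$ and $|\beta|$ are both of order $n^{1-a}$, and the failure measure is dominated by $n^{b+2-a}$. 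Summing the four sign cases and absorbing the contribution of the remainder distribution (defined in analogy with \eqref{CgEqSubsetRemainder}) gives the claimed bound $4n^{b+2-a}$.

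The main obstacle I anticipate is careful bookkeeping in the opposite-sign cases, where $\lambda_t^x$ must sometimes be chosen first and $\lambda_t^y$ derived from it (or vice versa), and one must verify that the remainder distribution does not introduce additional failure mass beyond what the affine constraint already permits. The symmetry $(X,Y) \leftrightarrow (Y,X)$ reduces the four sign cases to essentially two, and the hypothesis $a - 2 > b$ is exactly what is needed to ensure the failure measure is small. Since Lemma 4.4 of \cite{Smit11} carries out this analysis in detail for the simplex chain, the argument here is essentially a translation of those estimates, using \eqref{EqNarrowRepDeltaPos} and \eqref{EqNarrowRepDeltaNeg} in place of the representation \eqref{EqCayleyMoveRep}.
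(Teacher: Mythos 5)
Your proposal is correct and follows the same route as the paper, which itself simply states that the proof is ``nearly identical to the proof of Lemma 4.4 in \cite{Smit11}'' and defers all details to that reference. You give a more explicit sketch than the paper does: reducing the success condition to the affine constraint $\lambda_t^x = \alpha\lambda_t^y + \beta$ with $|D|\leq n^{1-a}$, using the largeness hypothesis to bound the denominator $2-\delta_t[i,j]=x[i,1]+x[j,1]$ below by $2n^{-b}$ in the same-sign case, and noting that in the opposite-sign case both $\delta$ and $\epsilon$ are $O(n^{-a})$ so the denominators are close to $2$. This yields a failure fraction of order $n^{b+1-a}$, comfortably within the stated $4n^{b+2-a}$. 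Since the content is just a translation of the simplex estimate in \cite{Smit11} to the representations \eqref{EqNarrowRepDeltaPos}--\eqref{EqNarrowRepDeltaNeg}, and you explicitly say so, the approach matches the paper's.
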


Next, as in Lemma 9, note that after a successful subset coupling involving sets $S$ and $R$ at time $t$, we have $\vert \vert X_{t+1} - Y_{t+1} \vert \vert_{1,S} \leq \vert \vert X_{t} - Y_{t} \vert \vert_{1,S \cup R}$. Thus, if all subset couplings until time $t$ have succeeded,

\begin{equation} \label{IneqSmallnessNarrowMats}
\vert \vert X_{t} - Y_{t} \vert \vert_{1,A} \leq \vert \vert X_{T_{1}} - Y_{T_{1}} \vert \vert_{1}
\end{equation}

for all $S \in P_{t}$. \par 

We are ready to prove Lemma \ref{LemmaNearbyCoupling}. By Lemma 5, $X_{T} = Y_{T}$ unless at least one subset coupling has failed or $P_{T_{1}} \neq \{ [n] \}$. Let $E_{1}$ be the event that $P_{T_{1}} \neq \{ [n] \}$, let $E_{2}$ be the event that $\inf_{i,j} \inf_{T_{1} \leq t \leq T_{2}} (Y_{t}[i,j], 2 - Y_{t}[i,j]) \geq n^{-\frac{a}{2} - 1.75}$, and let $E_{3}$ be the intersection of $E_{2}^{c}$ with the event that at least one subset coupling fails. By Lemma 4, $P[E_{1}] \leq 2n^{3.75 - \frac{a}{2}}$. By Lemma \ref{LemmaNarrowSubsetCoup} and Lemma \ref{LemmaSmallness}, $P[E_{3}] \leq 4 n^{3.75 - \frac{a}{2}}$. By Lemma \ref{LemmaLargenessMatrix}, $P[E_{2}] \leq 4 n^{3.75 - \frac{a}{2}}$. This completes the proof of Lemma \ref{LemmaNearbyCoupling}. \\

\par 

Finally, we prove Theorem 2. Fix $a = \frac{2}{11}(C + 18.25)$. We divide our run of length $T = C n \log(n)$ into a contractive phase of length $T_{1} = \left( \frac{9}{2}a - 11.25 \right) n \log(n)$ and a coupling phase of length $T_{2} = \left( a - 7 \right) n \log(n)$. Let $E_{1}$ be the event that $\vert \vert X_{T_{1}} - Y_{T_{1}} \vert \vert_{1} > n^{-a}$, and let $E_{2}$ be the intersection of $E_{1}^{c}$ with the event that $X_{T} \neq Y_{T}$. By Lemma \ref{LemmaWeakConMat}, $P[E_{1}] \leq n^{3.75 - \frac{a}{2}}$. By Lemma \ref{LemmaNearbyCoupling}, $P[E_{2}] \leq 10 n^{3.75 - \frac{a}{2}}$. This completes the proof of the upper bound in Theorem 2. \par 

To prove the lower bound, let $\tau$ be the (random) first time at which all $2n$ coordinates have been updated. Then fix the starting position $X_{0}$ of the Markov chain and let $H_{i,j} = \{ X \in \mathbb{R}^{2n} \vert X[i,j] = X_{0}[i,j] \}$ and set $H = \cup_{i,j} H_{i,j}$. Then $P[X_{t} \in H] - U_{n}(H) \geq P[\tau > t]$. Since only four of $2n$ coordinates are chosen at a time, the classical coupon-collector results in \cite{ErRe61} tell us that at time $T = \frac{1}{2} n (\log(n) - c)$, $ \vert K_{n}^{T}(x, H) - \pi(H) \vert \geq 1 - \exp(-\exp(c)) + o(1)$ as $n$ goes to infinity. 

\bibliographystyle{plain}
\bibliography{MoreGibbsBib}

\end{document}